\title[Cohomology of weakly almost periodic group representations]{On the cohomology of weakly almost periodic group representations}
\author{Uri Bader}
\address{Technion, Haifa, Israel}
\email{uri.bader@gmail.com}
\author {Christian Rosendal}
\address{University of Illinois at Chicago, Chicago, USA}
\email{rosendal.math@gmail.com}
\urladdr{http://homepages.math.uic.edu/$~$rosendal}
\author{Roman Sauer}
\address{Karlsruhe Institute of Technology, Karlsruhe, Germany}
\email{roman.sauer@kit.edu}
\urladdr{http://www.math.kit.edu/iag7/}
\newcommand{\norm}[1]{\lVert#1\rVert}
\newcommand{\Norm}[1]{\big\lVert#1\big\rVert}
\newcommand{\triple}[1]{|\!|\!|#1|\!|\!|}
\newcommand {\Z}{\mathbb Z}
\newcommand {\R}{\mathbb R}
\newcommand {\N}{\mathbb N}
\newcommand{\eps}{\epsilon}
\newcommand{\id}{\operatorname{id}}
\newcommand{\til}{\rightarrow}
\newcommand{\Lim}[1]{\mathop{\longrightarrow}\limits_{#1}}
\newcommand {\del}{ \; \big| \;}
\newcommand {\ku} {\mathcal}
\newcommand{\Id}{{\rm Id}}
\newcommand{\cohom}{H_c}
\newcommand{\redcohom}{\bar{H}_c}
\newcommand{\im}{\operatorname{im}}
\newcommand{\scrG}{\mathscr{G}}
\newtheorem{thm}{Theorem}
\newtheorem{cor}[thm]{Corollary}
\newtheorem{lem}[thm]{Lemma}
\theoremstyle{definition}
\newtheorem{defin}[thm]{Definition}
\newtheorem{rem}[thm]{Remark}
\newtheorem{exa}[thm]{Example}
\thanks{Christian Rosendal was partially supported by a grant from the Simons Foundation (Grant \#229959) and also recognizes support from the NSF (Grants  DMS 0901405 and  DMS 1201295)}
\thanks{Roman Sauer acknowledges support by a grant from the DFG (Grant 1661/3-1)}
\begin{document}


\begin{abstract}
	We initiate a study of cohomological aspects of weakly almost periodic group representations on Banach spaces, in particular, isometric representations on reflexive Banach spaces. Using the Ryll-Nardzewski 	fixed point Theorem, we prove a vanishing result for the restriction map (with respect to a subgroup) in the reduced cohomology of weakly periodic representations. Combined with the Alaoglu-Birkhoff decomposition theorem, 
this generalizes and complements theorems on continuous group cohomology by several authors. 
\end{abstract}

\maketitle

\section{Introduction}
In the following, $V$ denotes a Banach space over the real or complex numbers. Let $B(V)$ denote the algebra of bounded linear operators on $V$ and $GL(V)$ denote the subgroup of $B(V)$ consisting of the invertible bounded operators on $V$. The algebra $B(V)$, and thus also $GL(V)$, is naturally equipped with three different topologies, namely, the {\em operator norm topology}, the {\em strong operator topology (sot)} and the {\em weak operator topology (wot)}.
Here, the strong operator topology on $B(V)$ is that induced by the inclusion of $B(V)$ into the product space $(V,\norm\cdot)^V$, while the weak operator topology is that induced from the inclusion into $(V,{\rm weak})^V$. In other words, for a net $T_i\in B(V)$ and $T\in B(V)$, we have
\begin{align*}
T_i\Lim{sot}T&\iff\forall_{v\in V}~\lim_i\norm{T_iv-Tv}=0,\\
T_i\Lim{wot}T&\iff\forall_{v\in V,\psi\in V^*}~\lim_i\psi(T_iv-Tv)=0.
\end{align*}

\begin{defin}Let $G$ be a group.
A linear representation $\rho\colon G\til GL(V)$ is said to be \emph{weakly almost periodic (wap)} if, for all $v\in V$, the orbit $\rho(G)v$ is relatively weakly compact in~$V$. 	
\end{defin}
We note that, if $V$ is reflexive, every norm bounded subset of $V$ is relatively weakly compact, which implies that any isometric linear representation $\rho\colon G\til GL(V)$ is wap. This provides the main source of wap representations, though, as we indicate later, there are several other interesting examples.

A linear representation $\rho\colon G\til GL(V)$ of a group $G$ is said to be {\em bounded} if, for all $v\in V$ and $\psi\in V^*$, we have $\sup_{g\in G}\Norm{\psi\big(\rho(g)v\big)}<\infty$, in which case, by the uniform boundedness principle, we actually have $\sup_{g\in G}\norm {\rho(g)}<\infty$. Note that, if $\rho\colon G\til GL(V)$ is a bounded representation of $G$, then, with respect to the equivalent norm $\triple{v}=\sup_{g\in G}\norm{\rho(g)v}$ on $V$, $\rho$ is an isometric linear representation of $G$.

It follows from the comments above that wap representations are also bounded. Moreover, by applying Tikhonov's Theorem to the subspace $\prod_{v\in V}\rho(G)v$ of $(V,{\rm weak})^V$, one sees that, in this case, $\rho(G)$ is relatively compact in $B(V)$ with respect to the weak operator topology. In other words, the following two conditions are equivalent
\begin{enumerate}
\item $\rho\colon G\til GL(V)$ is wap,
\item $\rho(G)$ is relatively wot-compact in $B(V)$.
\end{enumerate}

In Section~\ref{sec: wap}, we present some background material on weakly almost periodic group representations and the associated decomposition theorems. One of the main results, the Alaoglu-Birkhoff decomposition, is used in our cohomological applications described below, but we also believe that the other mentioned decomposition theorems may be of independent interest in connection with cohomological aspects of group representations.

If $\rho\colon G\to GL(V)$ is a linear representation, we shall, most often, 
simply eliminate the reference to $\rho$ and say that $V$ is a \emph{Banach $G$-module}. 
Adjectives describing the various topological properties of $\rho$ will be used for the Banach $G$-module as well. In particular, a Banach $G$-module $V$ will be said to be {\em continuous} if the corresponding linear representation is sot-continuous, i.e., if for every $v\in V$, the map $g\in G\mapsto \rho(g)v \in V$ is continuous with respect to the norm on $V$. Also, we let $V^G=\{v\in V\;|\: \rho(g)v=v \text{ for all }g\in G\}$ denote the subspace of invariant vectors.

Let us now turn to cohomology. Theorem \ref{thm: restriction to normal subgroup} below is our main result in this context. As shown in 
Section~\ref{sec:proofs_of_cohomological_statements}, it follows 
rather easily from the Ryll-Nardzewski fixed point Theorem and general 
cohomological techniques. It is a generalization (to higher degrees, non-unitary coefficients and general topological groups) of an important ingredient 
in Y.~Shalom's cohomological rigidity result for unitary representations of locally compact groups~\cite{shalom}. 

\begin{thm}\label{thm: restriction to normal subgroup}
Let $G$ be a topological group and $V$ be a continuous wap Banach $G$-module. Assume that  $N$ and $C$ are subgroups of $G$ with $C$ lying in the centralizer of $N$. If $V^C=\{0\}$, i.e., $C$ fixes no non-zero vector in $V$, then the restriction map 
	\[
		\redcohom^n(G, V)\to \redcohom^n(N, V)
	\]
    in reduced continuous cohomology is zero for every $n\geqslant 0$. 

In particular, if $\alpha\colon G\curvearrowright V$ is a continuous affine isometric action of $G$ on $V$ with weakly almost periodic linear part $\rho$ so that $\rho(C)$ has no non-zero fixed vectors on $V$, then $\alpha\colon N\curvearrowright V$ almost fixes a point on $V$. 
\end{thm}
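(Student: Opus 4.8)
The plan is to prove the ``in particular'' clause, which is precisely the $n=1$ instance of the vanishing statement transcribed into the language of affine actions: via the standard dictionary, a continuous $1$-cocycle $b\colon G\to V$ for $\rho$ (so that $\alpha(g)v=\rho(g)v+b(g)$, with $b(g)=\alpha(g)0$) has $[b|_N]=0$ in $\redcohom^1(N,V)$ if and only if $\alpha|_N$ almost fixes a point, i.e.\ $\inf_{v\in V}\sup_{n\in K}\norm{\alpha(n)v-v}=0$ for every compact $K\subseteq N$; and every class in the image of $\redcohom^1(G,V)\to\redcohom^1(N,V)$ is of the form $[b|_N]$ with $b$ a cocycle on all of $G$. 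Since $\rho$ is wap it is bounded, so passing to the equivalent norm $\triple{v}=\sup_{g\in G}\norm{\rho(g)v}$ I may assume $\rho$, and hence $\alpha$, is isometric; the displacement statement is unaffected. So it suffices to show that the above infimum vanishes for a continuous cocycle $b$ defined on $G$.

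The key structural input is the commutation of $C$ and $N$. For $c\in C$ and $n\in N$ one has $cn=nc$, hence $\rho(c)b(n)+b(c)=b(cn)=b(nc)=\rho(n)b(c)+b(n)$, that is
\[
\rho(c)b(n)-b(n)=\rho(n)b(c)-b(c).
\]
Let $C$ act on $E:=C(N,V)$, the space of continuous maps $N\to V$ with the topology of uniform convergence on compact sets, by post-composition $(c\cdot f)(n)=\rho(c)f(n)$; this action is by linear isometries for each defining seminorm $\norm f_K=\sup_{n\in K}\norm{f(n)}$. The identity above says exactly that $c\cdot(b|_N)-b|_N=\delta b(c)$, where $\delta w\in B^1(N,V)$ is the coboundary $\delta w(n)=\rho(n)w-w$. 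Consequently the entire $C$-orbit of $b|_N$ lies in the coset $\mathcal A:=b|_N+B^1(N,V)\subseteq E$, which, being a translate of a linear subspace, is convex.

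Now form $Q$, the closed convex hull in $E$ of the orbit $\{c\cdot(b|_N):c\in C\}$. It is nonempty, convex, $C$-invariant (the $C$-action is by continuous linear maps), and contained in $\overline{\mathcal A}$ by the previous paragraph. The decisive point is that $Q$ is weakly compact: at each $n\in N$ the set $\{\rho(c)b(n):c\in C\}\subseteq\rho(C)b(n)$ is relatively weakly compact because $\rho$ is wap, so the orbit is pointwise relatively weakly compact; and it is equicontinuous, since $\norm{\rho(c)b(n)-\rho(c)b(n')}=\norm{b(n)-b(n')}$ uniformly in $c$ while $b|_N$ is continuous. By an Arzela--Ascoli argument for the weak topology (Grothendieck's interchange-of-limits criterion) the orbit is relatively weakly compact in $E$, and by Krein's theorem so is $Q$. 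The group $C$ acts on the weakly compact convex set $Q$ by affine isometries, so the Ryll--Nardzewski fixed point theorem (in its locally convex form) yields $\gamma\in Q$ with $\rho(c)\gamma(n)=\gamma(n)$ for all $c\in C$ and $n\in N$; hence $\gamma(n)\in V^C=\{0\}$ for every $n$, so $\gamma=0$. Thus $0\in Q\subseteq\overline{\mathcal A}$, which by definition of the topology on $E$ means that for every compact $K\subseteq N$ and $\eps>0$ there is $w\in V$ with $\sup_{n\in K}\norm{b(n)+\rho(n)w-w}=\sup_{n\in K}\norm{\alpha(n)w-w}<\eps$; that is, $\alpha\colon N\curvearrowright V$ almost fixes a point.

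The main obstacle is the weak compactness of $Q$ inside the function space $E$: this is exactly the step where weak almost periodicity of $\rho$ (rather than mere boundedness) is used, through the interaction of pointwise relative weak compactness with equicontinuity, and one has to be a little careful about which variant of the weak Arzela--Ascoli / Krein machinery one invokes. Everything else is routine: the translation between reduced $\redcohom^1$ and almost-fixed-points, the elementary commutation identity, and a black-box application of Ryll--Nardzewski. The general degree-$n$ case of the theorem follows the same template, with $E=C(N,V)$ replaced by the inhomogeneous cochain space $C^n(N,V)$ and the elementary identity replaced by the standard chain homotopy expressing that conjugation by an element of $C$ acts trivially on $H^\ast(G,V)$ and restricts, since $C$ centralizes $N$, to an honest $N$-coboundary relating $\rho(c)\cdot(\omega|_{N^n})$ and $\omega|_{N^n}$.
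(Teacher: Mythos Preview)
Your overall strategy is correct and is exactly the paper's: use that $C$ centralizes $N$ so that (via your commutation identity in degree~$1$, or the conjugation chain homotopy in general) each $c\in C$ moves a restricted cocycle only by an $N$-coboundary; then average over $C$ using Ryll--Nardzewski to land in $V^C=\{0\}$, forcing the cocycle into the closure of the coboundaries. Your remark about general $n$ is also right and is what the paper does.

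The difference is in how Ryll--Nardzewski is invoked. You apply it inside the function space $E=C(N,V)$ and correctly flag the weak compactness of $Q\subset E$ as the obstacle; your appeal to a weak Arzela--Ascoli/Grothendieck argument plus Krein's theorem is plausible but not pinned down, and in a non-Banach locally convex space both Krein's theorem and the needed form of Ryll--Nardzewski require care you do not supply. The paper sidesteps this entirely. Given a compact $K\subseteq N^{n+1}$, the image $c(K)\subset V$ is norm compact; choosing a finite $\epsilon/2$-net $\{v_1,\ldots,v_m\}$, one works in the \emph{Banach} space $U=\big(\bigoplus_{i=1}^m V\big)_{\ell_\infty}$ with the diagonal $C$-action, which is still wap. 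There the ordinary Krein theorem and the Banach-space Ryll--Nardzewski apply directly to the orbit of $(v_1,\ldots,v_m)$, yielding a fixed point in $U^C=\{0\}$ and hence some $\delta\in\Delta_C$ with $\norm{\delta\cdot v_i}<\epsilon/2$ for all $i$, so $\sup_{z\in K}\norm{\rho(\delta)c(z)}<\epsilon$. Since $\rho(\delta)c$ differs from $c$ by a coboundary, this finishes the proof. In short, the finite-net trick reduces your function-space compactness problem to a finite $\ell_\infty$-sum of copies of $V$, where nothing delicate is needed; this is the one technical idea you are missing.
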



As a rather immediate corollary of Theorem \ref{thm: restriction to normal subgroup}, we have the following result extending Shalom's Theorem 3.1 \cite{shalom} from locally compact to arbitrary topological groups. Unfortunately, we do not know if this also holds for arbitrary wap or even uniformly convex coefficients.

\begin{cor}\label{thm: shalom}
	Let $\rho \colon G_1\times G_2\til \ku U(\ku H)$ be a continuous unitary representation of a direct product of topological groups.  Then
	\[
		\bar H_c^1(G_1\times G_2, \ku H)\cong \bar H_c^1(G_1, \ku H^{G_2})\oplus \bar H_c^1(G_2, \ku H^{G_1}).
	\]
In particular, if $\ku H^{G_1}=\ku H^{G_2}=\{0\}$ and $\alpha\colon G_1\times G_2\curvearrowright \ku H$ is a continuous affine isometric action with linear part $\rho$, then $\alpha$ almost fixes a point on $\ku H$.
\end{cor}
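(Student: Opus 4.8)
Set $G=G_1\times G_2$. The plan is to reduce the statement to Theorem~\ref{thm: restriction to normal subgroup} by decomposing $\ku H$ orthogonally and exploiting additivity of reduced cohomology in the coefficient module.

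Since $\ku H$ is reflexive, $\rho$ is wap, so Theorem~\ref{thm: restriction to normal subgroup} applies. First I would decompose $\ku H$ as a continuous unitary $G$-module. Because $G_2$ commutes with $G_1$, the unitaries $\rho(G_2)$ commute with $\rho(G_1)$ and hence with the orthogonal projection onto $\ku H^{G_1}$; so $\ku H^{G_1}$ and $(\ku H^{G_1})^\perp$ are $G$-submodules, and symmetrically for $G_2$. Intersecting gives an orthogonal $G$-module decomposition $\ku H=\ku H^{G}\oplus A\oplus B\oplus C$ with $A=\ku H^{G_1}\cap(\ku H^{G_2})^\perp$, $B=\ku H^{G_2}\cap(\ku H^{G_1})^\perp$, $C=(\ku H^{G_1})^\perp\cap(\ku H^{G_2})^\perp$; one checks that $\ku H^{G_1}=\ku H^{G}\oplus A$ and $\ku H^{G_2}=\ku H^{G}\oplus B$, while $A^{G_2}=B^{G_1}=C^{G_1}=C^{G_2}=\{0\}$. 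As reduced continuous cohomology is additive over finite direct sums of continuous Banach $G$-modules, it suffices to treat the four summands separately and then reassemble.

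For the trivial module $\ku H^{G}$ the coboundaries vanish, so $\redcohom^1(G,\ku H^{G})=\operatorname{Hom}(G,\ku H^{G})$ (continuous homomorphisms into the underlying abelian group), and $\operatorname{Hom}(G_1\times G_2,V)=\operatorname{Hom}(G_1,V)\oplus\operatorname{Hom}(G_2,V)$. For the summand $A$, on which $G_1$ acts trivially and $A^{G_2}=\{0\}$: evaluating the cocycle identity on commuting pairs $g_1g_2=g_2g_1$ forces every $1$-cocycle $G\to A$ to take values in $A^{G_2}=\{0\}$ on $G_1$, hence to vanish on $G_1$; restriction to $G_2$ is then a topological isomorphism $Z^1(G,A)\to Z^1(G_2,A)$ carrying coboundaries onto coboundaries, so $\redcohom^1(G,A)\cong\redcohom^1(G_2,A)$. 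Symmetrically $\redcohom^1(G,B)\cong\redcohom^1(G_1,B)$. Reassembling, and using $\ku H^{G_2}=\ku H^{G}\oplus B$, $\ku H^{G_1}=\ku H^{G}\oplus A$ together with the triviality of $\ku H^G$ as a $G_1$- and as a $G_2$-module, the contributions of $\ku H^G$, $A$ and $B$ add up to precisely $\redcohom^1(G_1,\ku H^{G_2})\oplus\redcohom^1(G_2,\ku H^{G_1})$.

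It remains to show that the summand $C$ contributes nothing, i.e.\ $\redcohom^1(G,C)=0$ — equivalently, every continuous affine isometric $G$-action with linear part inside $C$ almost fixes a point. This is the heart of the matter and the step I expect to be the main obstacle; it is also exactly the content of the final assertion once $\ku H^{G_1}=\ku H^{G_2}=\{0\}$ (so that $\ku H=C$). I would apply Theorem~\ref{thm: restriction to normal subgroup} twice — with $N=G_1$ and centralising subgroup $G_2$ (using $C^{G_2}=\{0\}$), and with $N=G_2$ and centralising subgroup $G_1$ (using $C^{G_1}=\{0\}$) — to get that $\alpha|_{G_1}$ and $\alpha|_{G_2}$ each almost fix a point on $C$. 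The delicate point is the combination. For compact $K_1\subseteq G_1$ and $\eps>0$ the set $F(K_1,\eps)=\{v\in C:\ \sup_{g\in K_1}\norm{\alpha(g)v-v}\leqslant\eps\}$ is nonempty, closed, convex and, crucially, $\alpha(G_2)$-invariant (since $G_2$ commutes with $G_1$ and acts by affine isometries). If one can locate in $F(K_1,\eps)$ a point that is $\eps$-almost fixed by a prescribed compact $K_2\subseteq G_2$, that point is $\eps$-almost fixed by $K_1\times K_2$, and letting $K_1,K_2,\eps$ vary shows that $\alpha$ almost fixes a point. The natural way to produce such a point is to run the Ryll-Nardzewski fixed point Theorem for the affine isometric $G_2$-action on $F(K_1,\eps)$; the hard part is that $F(K_1,\eps)$ need not be bounded — precisely when $\rho|_{G_1}$ admits almost invariant vectors — so one must first extract from it a weakly compact, convex, $\alpha(G_2)$-invariant subset, built out of the relatively weakly compact linear orbits $\rho(G_2)v_0$ furnished by weak almost periodicity, which is the same mechanism that powers the proof of Theorem~\ref{thm: restriction to normal subgroup}. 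Once $\redcohom^1(G,C)=0$ is in hand the isomorphism follows, and the final assertion is its specialisation to $\ku H^{G_1}=\ku H^{G_2}=\{0\}$.
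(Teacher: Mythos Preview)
Your reduction via the orthogonal decomposition $\ku H=\ku H^G\oplus A\oplus B\oplus C$ and your treatment of the summands $\ku H^G$, $A$, $B$ are fine and are in line with the paper's (i.e.\ Shalom's) strategy. The gap is precisely where you flag it: the vanishing of $\redcohom^1(G,C)$.

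Your proposed fix---``extract from $F(K_1,\eps)$ a weakly compact, convex, $\alpha(G_2)$-invariant subset, built out of the relatively weakly compact linear orbits $\rho(G_2)v_0$''---does not go through as stated. For $v_0\in F(K_1,\eps)$ one has $\alpha(g_2)v_0=\rho(g_2)v_0+b(g_2)$, and while $\rho(G_2)v_0$ is relatively weakly compact, the cocycle piece $\{b(g_2):g_2\in G_2\}$ is in general unbounded, so $\overline{\rm conv}\,\alpha(G_2)v_0$ need not be weakly compact and Ryll--Nardzewski cannot be invoked on it. Weak almost periodicity of the \emph{linear} part does not by itself control the \emph{affine} orbit.

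The paper points to the missing ingredient explicitly: the $1$-Lipschitz nearest-point projection onto closed convex subsets of a Hilbert space. Let $P\colon C\to F(K_1,\eps)$ be this projection. Since $F(K_1,\eps)$ is $\alpha(G_2)$-invariant and $G_2$ acts by affine isometries, $P$ is $\alpha(G_2)$-equivariant. Now apply Theorem~\ref{thm: restriction to normal subgroup} to obtain a $(K_2,\eps)$-almost fixed point $w\in C$ for $\alpha|_{G_2}$, and set $v=P(w)$. Then $v\in F(K_1,\eps)$, and for $g_2\in K_2$,
\[
\norm{\alpha(g_2)v-v}=\norm{P(\alpha(g_2)w)-P(w)}\leqslant\norm{\alpha(g_2)w-w}\leqslant\eps,
\]
so $v$ is $\eps$-almost fixed by $K_1$ and by $K_2$ simultaneously. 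This is exactly the ``contracting projection'' step the paper cites from Shalom; it replaces the weakly compact invariant subset you were looking for and is the reason the corollary is stated for Hilbert (rather than general wap) coefficients.
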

Shalom's proof of Corollary \ref{thm: shalom} in the locally compact case (see \cite{shalom} p.~ 19--20) uses the existence of contracting projections onto closed convex subsets of a Hilbert space and the analogue of Theorem~\ref{thm: restriction to normal subgroup} in degree $n=1$ for unitary representations of locally compact groups. It applies ad verbatim to our setting, so we need not repeat it here.

U. Bader, A. Furman, T. Gelander and N. Monod \cite{bader} studied the structure of affine actions of product groups on uniformly convex spaces (a subclass of the reflexive spaces) and in this setting obtained a slightly weaker result than Corollary \ref{thm: shalom}. Namely, if  $G_1\times G_2$ is a product of topological groups and $V$ is a uniformly convex $G_1\times G_2$-module
with $V^{G_1}=V^{G_2}=\{0\}$, then either
\begin{enumerate}
\item[(a)] there are almost $G_1\times G_2$-invariant unit vectors in $V$, or
\item[(b)] $H_c^1(G_1\times G_2,V)=\{0\}$.
\end{enumerate}
Theorem \ref{thm: restriction to normal subgroup} is somewhat independent of their statement and shows that
$$
Z^1(G_1\times G_2,V)\subseteq \overline{B^1(G_1,V)}\times \overline{B^1(G_2,V)}
$$
can be added to condition (a) above.

\begin{rem}We also note that the naive generalization of Corollary \ref{thm: shalom} to degrees $n\geqslant 2$ 
is false. If $G=\mathbb F_2\times \mathbb F_2$ and 
$V=\ell^2(G)$, then $\bar H^2(G, V)\ne 0$, because the second $L^2$-Betti number of $G$ is non-zero, but $V^{\mathbb F_2}=\{0\}$ for each of the two copies of $\mathbb F_2$. 
\end{rem}

The following corollary follows from 
Theorem~\ref{thm: restriction to normal subgroup} by taking $N=G$. It generalizes 
earlier results by M.~Puls~\cite{puls}, F.~Martin and A.~Valette~\cite{martin} and 
E.~Kappos~\cite{kappos}. 
Puls and Martin-Valette proved the vanishing of the first $\ell^p$-cohomology ($p\in (1,\infty)$) of a finitely generated group with an element of infinite order in the center or 
infinite center, respectively. P.~Nowak~\cite{nowak} informed us that he independently found a proof of the result 
by Martin-Valette, which is -- like our Theorem~\ref{thm: restriction to normal subgroup} -- based on the Ryll-Nardzewski fixed point theorem. 

The first $\ell^p$-cohomology of a finitely generated group $G$ coincides with 
$\bar H^1(G, \ell^p(G))$ (see~\cite[Lemma 1]{bourdon}). Later, 
Kappos proved that, if the discrete group $G$ satisfies the 
finiteness property FP$_n$ and has infinite center, then 
$\bar H^i(G, \ell^p(G))=0$ for $i\le n$.

\begin{cor}\label{cor: center}
Let $G$ be a topological group and $Z\leqslant G$ its center. Let $V$ be a continuous wap Banach $G$-module such that $V^Z=\{0\}$.
Then for any $n\geqslant 0$
\[
	\redcohom^n(G, V)=0.
\]
\end{cor}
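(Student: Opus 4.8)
The plan is to obtain Corollary~\ref{cor: center} as the special case $N = G$ of Theorem~\ref{thm: restriction to normal subgroup}. First I would set $N := G$ and $C := Z$. Since $Z$ is by definition the center of $G$, every element of $C = Z$ commutes with every element of $N = G$; hence $C$ lies in (indeed equals) the centralizer of $N$ in $G$. The standing hypotheses of the theorem are met: $G$ is a topological group, $V$ is a continuous wap Banach $G$-module, and the requirement that $C$ have no non-zero fixed vector is precisely the assumption $V^Z = \{0\}$.

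Applying Theorem~\ref{thm: restriction to normal subgroup} then yields that the restriction map
\[
\redcohom^n(G, V) \longrightarrow \redcohom^n(N, V) = \redcohom^n(G, V)
\]
is zero for every $n \geqslant 0$. The key observation needed to conclude is that, since $N = G$, this restriction map is induced by the identity chain map on the complex of continuous inhomogeneous cochains, so it is the identity endomorphism of $\redcohom^n(G, V)$. A map on a (topological) vector space that is simultaneously the identity and the zero map forces the space to be trivial, whence $\redcohom^n(G, V) = 0$ for all $n \geqslant 0$.

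I expect no genuine obstacle here: the entire content has already been packaged into Theorem~\ref{thm: restriction to normal subgroup}, and the only points deserving a word of justification are the identifications ``centralizer of $G$ in $G$ equals the center $Z$'' and ``restriction along the identity $G \hookrightarrow G$ is the identity on cohomology''. If desired, the affine reformulation ($\alpha\colon G\curvearrowright V$ almost fixes a point whenever $V^Z=\{0\}$) would follow in the same manner from the second half of the theorem, but it is not needed for the stated cohomological vanishing.
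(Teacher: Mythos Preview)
Your proposal is correct and is exactly the argument the paper intends: it states explicitly that Corollary~\ref{cor: center} follows from Theorem~\ref{thm: restriction to normal subgroup} by taking $N=G$ (with $C=Z$), and your justification that the restriction along the identity inclusion is the identity on $\redcohom^n(G,V)$ is the only point needing a word. The mention of ``inhomogeneous'' cochains is a harmless slip---the paper works with the homogeneous resolution---but this does not affect the argument.
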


\begin{rem}
	We should mention that Corollary \ref{cor: center} fails for non-wap representations even of abelian groups. To see this, consider $G=\Z^n$ and $V=\ell^1(\Z^n)$. Then Poincare duality implies that
	\[
		H^n\bigl(\Z^n, \ell^1(\Z^n)\bigr)\cong H_0\bigl(\Z^n, \ell^1(\Z^n)\bigr)\cong \ell^1(\Z^n)_{\Z^n}, 
	\]
	where the latter are the co-invariants of $\ell^1(\Z^n)$, i.e., the quotient of 
	$\ell^1(\Z^n)$ by the submodule $W\subset \ell^1(\Z^n)$ generated by $\{gv-v\mid v\in \ell^1(\Z^n), g\in\Z^n\}$. 
	It is not hard to see that the Poincare duality map is a continuous isomorphism, 
	whereby
	\[
		\bar H^n\bigl(\Z^n, \ell^1(\Z^n)\bigr)\cong \bar H_0\bigl(\Z^n, \ell^1(\Z^n)\bigr)\cong \ell^1(\Z^n)/\bar{W}.
	\]
	The last term is not zero, because summation of coefficients 
	induces a non-zero linear functional on $\ell^1(\Z^n)/\bar{W}$.

More concretely,  define a cocycle $b\in Z^1(\Z,\ell^1(\Z))$ by $b(n)=e_0+e_1+\ldots+e_{n-1}$.  Then, if $x=\sum_{n=-k}^ka_ne_n$ is a finitely supported vector, we have
\[\begin{split}
\norm{x-\alpha(1)x}
=&|a_{-k}|+|a_{-k+1}-a_{-k}|+\ldots+|a_{-1}-a_{-2}|+|a_0-a_{-1}+1|\\
&+|a_1-a_0|+\ldots+|a_k-a_{k-1}|+|a_k|\\
\geqslant&1.
\end{split}\]
So $\norm{x-\alpha(1)x}\geqslant 1$, for all $x\in \ell^1(\Z)$, showing that $b\notin \overline{B^1(\Z,\ell^1(\Z))}$.

\end{rem}

As a generalization of Corollary \ref{cor: center} for $n=1$, we have the following result extending Corollary~3.7 in Shalom's paper~\cite{shalom}, which deals with { unitary} 
representations of {locally compact} groups. 

\begin{thm}\label{thm: center}
   Let $G$ be a topological group and $Z\leqslant G$ its center. Let $V$ be a continuous 
   wap Banach $G$-module such that $V^G=\{0\}$. Then the projection 
$G\to G/Z$ induces an isomorphism 
\[
	\redcohom^1(G, V)\cong \redcohom^1(G/Z, V^{Z}). 
\]
\end{thm}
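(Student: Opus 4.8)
The plan is to deduce Theorem~\ref{thm: center} from Theorem~\ref{thm: restriction to normal subgroup} together with the Lyndon--Hochschild--Serre type machinery for reduced continuous cohomology applied to the central extension $1\to Z\to G\to G/Z\to 1$. Here $Z$ plays the role of $N$ and, since $Z$ is central, we may also take $C=Z$; the hypothesis $V^G=\{0\}$ is \emph{a fortiori} $V^Z=\{0\}$, since $Z\leqslant G$. Thus Theorem~\ref{thm: restriction to normal subgroup} (with $N=C=Z$) tells us the restriction map $\redcohom^1(G,V)\to\redcohom^1(Z,V)$ vanishes. The intuitive content is: every reduced $1$-cocycle of $G$, when restricted to the central subgroup $Z$, is an approximate coboundary, so "nothing happens on $Z$", and the cohomology of $G$ should be captured by $G/Z$ acting on the $Z$-fixed part $V^Z$.

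First I would set up the relevant exact sequence in low degrees. For an arbitrary topological group $G$, a closed (or even arbitrary) normal subgroup $N$, and a continuous Banach $G$-module $V$, there is a natural inflation--restriction sequence
\[
0\to \cohom^1(G/N, V^N)\xrightarrow{\ \inf\ } \cohom^1(G, V)\xrightarrow{\ \mathrm{res}\ } \cohom^1(N,V)^{G/N},
\]
and I would need its reduced analogue. The key points to verify are: (i) inflation $\redcohom^1(G/Z,V^Z)\to\redcohom^1(G,V)$ is well defined and injective — injectivity because a cocycle inflated from $G/Z$ that becomes an approximate coboundary over $G$ is already an approximate coboundary over $G/Z$, which uses that the pullback of the $G$-action on $V$ along $G\to G/Z$ restricted to $V^Z$ is just the $G/Z$-action, and that $B^1(G,V)\cap (\text{inflated cocycles})$ maps onto $B^1(G/Z, V^Z)$ up to closure; (ii) the image of inflation is exactly the kernel of restriction to $Z$. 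Granting (i)–(ii), Theorem~\ref{thm: restriction to normal subgroup} forces $\mathrm{res}=0$ on the reduced level, hence inflation is surjective, hence an isomorphism $\redcohom^1(G/Z,V^Z)\cong\redcohom^1(G,V)$, which is the claim (the stated direction of the iso is just the inverse of inflation).

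The main obstacle I anticipate is the passage from ordinary to \emph{reduced} cohomology in the inflation--restriction sequence: exactness of the unreduced five-term sequence does not automatically pass to quotients by the closures of coboundaries, because taking closures need not be exact. Concretely, I need to know that a cocycle $b\in Z^1(G,V)$ whose restriction to $Z$ lies in $\overline{B^1(Z,V)}$ is cohomologous, modulo $\overline{B^1(G,V)}$, to a cocycle inflated from $G/Z$. Here is where the wap hypothesis and the Ryll--Nardzewski / Alaoglu--Birkhoff apparatus re-enter: one uses that on a wap module the closure operations are well behaved enough — e.g.\ one can average a cocycle over $Z$ using a fixed point of the affine $Z$-action on a suitable space of cocycles, or invoke the decomposition $V=V^Z\oplus \overline{\langle \rho(Z)v-v\rangle}$ coming from the Alaoglu--Birkhoff theorem applied to $\rho|_Z$ — to replace $b$ by a genuinely $Z$-fixed-valued, $Z$-trivial cocycle in its reduced class. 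I would therefore carry out the argument as follows: (1) invoke Theorem~\ref{thm: restriction to normal subgroup} with $N=C=Z$ to get $\mathrm{res}\colon\redcohom^1(G,V)\to\redcohom^1(Z,V)$ zero; (2) establish the reduced inflation--restriction sequence in degree one for the central extension, using the Alaoglu--Birkhoff splitting $V=V^Z\oplus W_Z$ (with $W_Z=\overline{\mathrm{span}}\{\rho(z)v-v\}$, a $G$-invariant decomposition since $Z$ is central) to handle the closures; (3) conclude inflation is an isomorphism and identify its inverse with the map induced by $G\to G/Z$. Steps (1) and (3) are formal; step (2), and within it the interaction of the closure with the decomposition, is the technical heart.
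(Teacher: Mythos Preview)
There is a genuine gap at the very first step. You claim that ``the hypothesis $V^G=\{0\}$ is \emph{a fortiori} $V^Z=\{0\}$, since $Z\leqslant G$'', but the inclusion goes the other way: since $Z\leqslant G$, any vector fixed by all of $G$ is in particular fixed by $Z$, so $V^G\subseteq V^Z$, and $V^G=\{0\}$ does \emph{not} force $V^Z=\{0\}$. (Indeed, the conclusion $\redcohom^1(G,V)\cong\redcohom^1(G/Z,V^Z)$ is only interesting when $V^Z\neq\{0\}$.) Consequently you cannot invoke Theorem~\ref{thm: restriction to normal subgroup} with $C=Z$ on the module $V$: its hypothesis $V^C=\{0\}$ is simply not available. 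Your step~(1) therefore fails as stated, and with it the rest of the scheme.

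The paper's proof repairs this by reversing the order of your steps. It first applies the Alaoglu--Birkhoff decomposition to $\rho|_Z$ to obtain a $G$-invariant splitting $V=V^Z\oplus V_0$ with $V_0^Z=\{0\}$; then Theorem~\ref{thm: restriction to normal subgroup} (with $N=G$ and $C=Z$, applied to $V_0$) gives $\redcohom^\ast(G,V_0)=0$, whence $\redcohom^\ast(G,V)\cong\redcohom^\ast(G,V^Z)$. On the remaining piece $V^Z$ the action of $Z$ is trivial, and the paper shows by a short direct computation that any cocycle $c\in Z^1(G,V^Z)$ is constant on $Z$-cosets: from the cocycle identity one extracts $gc(e,h)=c(e,h)$ for $h\in Z$ and all $g\in G$, so $c(e,h)\in V^G=\{0\}$, hence $c(h g_1,g_2)=c(g_1,h g_2)=c(g_1,g_2)$. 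This gives a topological isomorphism $Z^1(G/Z,V^Z)\cong Z^1(G,V^Z)$ at the level of cocycles (and obviously on $0$-cochains), so the reduced cohomologies agree without any appeal to a reduced inflation--restriction sequence. In particular the ``main obstacle'' you anticipate --- passing exactness to closures --- does not arise: once one works on $V^Z$, the restriction to $Z$ is literally zero on cocycles, not merely zero in reduced cohomology.
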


The following result is a generalization of the fact that nilpotent groups have 
property $H_T$~\cite{shalom-harmonic}, i.e., of the fact that nilpotent groups satisfy the conclusion of 
the theorem below for unitary representations. We believe that it holds in each 
degree, but are only able to show it for degree~$1$. In fact, it is true in every degree for simply connected nilpotent Lie groups and unitary coefficients by a result of 
Blanc~\cite{blanc}. 

\begin{thm}\label{thm: nilpotent}
	Let $G$ be a nilpotent topological group. If $V$ is a continuous wap 
	Banach $G$-module such that $V^G=\{0\}$, then $\redcohom^1(G,V)=0$. 
\end{thm}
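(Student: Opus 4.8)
The plan is to induct on the nilpotency class of $G$. The base case, $G$ abelian, is immediate: then $G$ equals its own center $Z$, so the hypothesis $V^G = \{0\}$ means $V^Z = \{0\}$, and Corollary \ref{cor: center} (with $n=1$) gives $\redcohom^1(G,V) = 0$. For the inductive step, let $Z \leqslant G$ be the center and write $\bar G = G/Z$, which is nilpotent of strictly smaller class. I first want to reduce to the module $V^Z$. By Theorem \ref{thm: center}, the projection $G \to G/Z$ induces an isomorphism $\redcohom^1(G, V) \cong \redcohom^1(G/Z, V^Z)$, so it suffices to show that the right-hand side vanishes.

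To apply the inductive hypothesis to $\bar G = G/Z$ acting on $W := V^Z$, I need to check two things: that $W$ is again a continuous wap Banach $\bar G$-module, and that $W^{\bar G} = \{0\}$. For the first point, $W = V^Z$ is a closed $G$-submodule of $V$ on which $Z$ acts trivially, hence it is genuinely a $\bar G$-module; continuity passes to the subspace and to the quotient group, and relative weak compactness of orbits is inherited by closed invariant subspaces (a weakly compact set intersected with a closed subspace stays weakly compact, since closed subspaces are weakly closed), so $W$ is wap over $\bar G$. For the second point, $W^{\bar G} = (V^Z)^{G/Z} = V^G = \{0\}$ by hypothesis. The inductive hypothesis then yields $\redcohom^1(\bar G, W) = 0$, and combined with the isomorphism from Theorem \ref{thm: center} we conclude $\redcohom^1(G, V) = 0$.

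The one genuinely delicate point, and the place I would be most careful, is the verification that $V^Z$ is wap as a $\bar{G}$-module when $V^Z$ itself might be trivial or when one worries about the interaction between the quotient-group action and weak compactness; but since the $\bar G$-action on $V^Z$ is just the restriction of the $G$-action (well-defined precisely because $Z$ acts trivially on $V^Z$), an orbit $\bar G w = G w$ as a subset of $V$, and its relative weak compactness in $V$ is exactly the relative weak compactness in the closed subspace $V^Z$. So this reduces to the elementary topological fact already noted. A secondary subtlety is making sure Theorem \ref{thm: center} applies at each stage: it requires $V^G = \{0\}$, which holds by hypothesis at the first step and is exactly what we re-established as $W^{\bar G} = \{0\}$ at each subsequent step, so the induction is internally consistent. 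No transfinite issues arise since nilpotency class is a finite integer that drops by at least one at each step.
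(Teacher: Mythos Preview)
Your proof is correct and follows exactly the approach of the paper: induction on the nilpotency class, with the abelian base case handled by Corollary~\ref{cor: center} (equivalently Theorem~\ref{thm: restriction to normal subgroup} with $N=G$) and the inductive step carried by the isomorphism of Theorem~\ref{thm: center}. The paper states this in one sentence as an ``obvious induction,'' while you have usefully spelled out the verifications that $V^Z$ is a continuous wap $G/Z$-module with $(V^Z)^{G/Z}=V^G=\{0\}$.
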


In contrast to the situation for \emph{reduced} cohomology, every (discrete) infinite 
    amenable group $\Gamma$ has a unitary representation $V$ with 
    $H^1(\Gamma, V)\ne 0$~\cite[Theorem 0.2]{shalom}.

\section{Wap representations: examples and decompositions}\label{sec: wap}
Similarly to weak almost periodicity, a linear representation $\rho\colon G\til GL(V)$ is {\em almost periodic} if the orbits $\rho(G)v$ are relatively norm compact in $V$. Based on the Peter-Weyl Theorem, one can show  (see \cite{shiga} for condition (4)) the equivalence of the following statements,
\begin{enumerate}
\item $\rho\colon G\til GL(V)$ is almost periodic,
\item $\rho(G)$ is relatively sot-compact in $B(V)$,
\item $\rho(G)$ is relatively sot-compact in $GL(V)$,
\item $V$ is the closed linear span of finite-dimensional irreducible subspaces.
\end{enumerate}

We note the difference between the two criteria for almost and weak almost periodicity. For an almost periodic representation $\rho\colon G\til GL(V)$, the sot-closure  of $\rho(G)$ in $B(V)$ is an sot-compact subgroup $\widetilde G$ of $GL(V)$. Moreover, when restricted to bounded subsets of $B(V)$, the composition operation $\big(B(V),{\rm sot}\big)\times \big(B(V),{\rm sot}\big)\til \big(B(V),{\rm sot}\big)$ is jointly continuous in the two variables, which means that $\widetilde G$ is a compact topological group in the strong operator topology.

On the other hand, even when restricted to bounded subsets,  the composition operation  $\big(B(V),{\rm wot}\big)\times \big(B(V),{\rm wot}\big)\til \big(B(V),{\rm wot}\big)$ is, in general, only separately continuous in the two variables.
It follows that, for a weakly almost periodic representation, the wot-closure of $\rho(G)$ in $B(V)$ is a wot-compact subsemigroup $S$ of $B(V)$, but need not be a subgroup. The separate continuity of the multiplication in $S$ is expressed by saying that $S$ is a compact {\em semitopological} semigroup. R. Ellis'  joint continuity Theorem \cite{ellis2} implies that a compact semitopological semigroup, which is algebraically a group, is, in fact, a compact topological group.

The first fact also means that any almost periodic representation of a group $G$ factors through an sot-continuous  representation of a compact group and, conversely, any such representation is obviously almost periodic.

The construction of non-trivial weakly almost periodic representations is somewhat more subtle, since it follows easily from Namioka's joint continuity Theorem \cite{namioka} that a wot-continuous representation of a  compact (in fact, even locally compact or completely metrizable) group is always sot-continuous and hence  also almost periodic. But we have the following three central examples.
\begin{exa} Note that if $V$ is reflexive, then, by the Banach--Alaoglu Theorem, any norm bounded subset of $V$ is relatively weakly compact and thus any bounded linear representation $\rho\colon G\til GL(V)$ of a group $G$ is weakly almost periodic. 
\end{exa}

\begin{exa}Suppose $(X,\mu)$ is a probability space and $G$ is a group acting by measure preserving transformations on $X$. Let also $\rho\colon G\til GL\big( L^p(X,\mu)\big)$ denote the corresponding isometric linear representation of  $G$ on $L^p(X,\mu)$ for $1\leqslant p<\infty$. Then $\rho$ is weakly almost periodic. For $1<p<\infty$, this of course follows from the reflexivity of $L^p(X,\mu)$, while, for $p=1$, this is due to  the fact that the weak operator topologies of $B\big(L^1(X,\mu)\big)$ and $B\big(L^2(X,\mu)\big)$ coincide on the intersection $B\big(L^1(X,\mu)\big)\cap B\big(L^2(X,\mu)\big)$ (Cor. 16.5 \cite{nagel}). 
\end{exa}

\begin{exa}
Let $G$ be a group acting by homeomorphisms on a compact Hausdorff space $K$. Then the corresponding isometric linear representation $\rho\colon G\til GL\big(C(K)\big)$ is weakly almost periodic if and only if all accumulation points of $G$ in $K^K$ are continuous, i.e., whenever a function $f\colon K\til K$ is the pointwise limit of a net in $G$, then $f$ is continuous (Prop.~II.2 \cite{ellis}). This is a consequence of A. Grothendieck's criterium for weak compactness in $C(K)$ \cite{grothendieck}.

On the other hand, the flow $G\curvearrowright K$ is distal if and only if the pointwise closure of $G$ in $K^K$ is a subgroup of $K^K$. It thus follows from Ellis' joint continuity Theorem that, if the flow is both distal and satisfies the criterion for weak almost periodicity, then the pointwise closure of $G$ in $K^K$ is a compact topological group of homeomorphisms in the pointwise convergence topology. By Lebesgue's dominated convergence Theorem, one sees that the representation $\rho\colon G\til GL\big(C(K)\big)$ is wot-continuous, so also sot-continuous and therefore almost periodic.
\end{exa}

Apart from appearing naturally in the contexts above, the weakly almost periodic representations possess another advantage in that they are sufficiently well behaved to induce canonical invariant decompositions of the Banach space $V$. To state these, given a bounded linear representation $\rho\colon G\til GL(V)$, we call a vector $v\in V$
\begin{enumerate}
\item {\em almost periodic} if $\rho(G)v$ is relatively norm compact, and 
\item {\em furtive} if $0\in \overline{\rho(G)v}^{\rm weak}$. 
\end{enumerate}
Note that, since $\rho(G)v$ is bounded away from $0$ for any $v\neq 0$, a non-zero vector $v$ cannot simultaneously be almost periodic and  furtive. 

\begin{thm}[The Jacobs--de Leeuw--Glicksberg decomposition] \label{jdlg}
Every wap Banach $G$-module $V$ admits a decomposition into closed linear $G$-invariant subspaces
\[
V=V_1\oplus V_2,
\]
where $V_1$ is the set of almost periodic vectors and $V_2$ is the set of furtive vectors. 
\end{thm}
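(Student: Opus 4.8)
The plan is to analyse the closure $S:=\overline{\rho(G)}^{\mathrm{wot}}$ of $\rho(G)$ inside $B(V)$. By the equivalence (wap $\iff$ $\rho(G)$ relatively wot-compact) recorded in the introduction, $S$ is wot-compact, and since multiplication on bounded subsets of $(B(V),\mathrm{wot})$ is separately continuous, $S$ is a compact semitopological semigroup with identity, inside which every $\rho(g)$ is invertible (with inverse $\rho(g^{-1})$) and $\rho(G)$ is wot-dense. I would begin by recording the soft facts. The set $V_1$ of almost periodic vectors is a norm-closed $G$-invariant linear subspace: sums and scalar multiples of relatively norm-compact sets are relatively norm-compact, norm-closedness uses $\sup_g\norm{\rho(g)}<\infty$, and $G$-invariance holds because $\rho(G)\rho(h)v$ is the same orbit as $\rho(G)v$. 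The set $V_2$ of furtive vectors is $G$-invariant and closed under scalars, and $V_1\cap V_2=\{0\}$ since $\rho(G)v$ stays bounded away from $0$ for $v\neq0$. Consequently it suffices to produce a bounded linear projection $P$ on $V$ with $PV\subseteq V_1$, $(\Id-P)V\subseteq V_2$ and $V_2\subseteq(\Id-P)V$: the first two already force $V_1=PV$ (split any $v\in V_1$ as $Pv+(\Id-P)v$; then $(\Id-P)v=v-Pv$ lies in both $V_1$ and $V_2$, hence is $0$), and then $V=PV\oplus(\Id-P)V=V_1\oplus V_2$ with both summands closed.

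To construct $P$ I would use the structure theory of compact right-topological semigroups. By Zorn's Lemma and wot-compactness, $S$ has a minimal closed left ideal $L$; for $s\in L$ minimality gives $Ls=L\ni s$, so $\{t\in L:ts=s\}$ is a non-empty wot-closed sub-semigroup of $S$ and hence, by Ellis's idempotent theorem, contains an idempotent $P=P^2\in L$ — a minimal idempotent of $S$ and a bounded linear projection on $V$. By the theory of completely simple semigroups, $PSP=\{s\in S:Ps=s=sP\}$ is a group with identity $P$; being wot-closed it is compact, so Ellis's joint continuity theorem (already invoked in the text) makes $PSP$ a compact \emph{topological} group. It carries $PV$ into $PV$ and thus acts on $PV$ by bounded operators through a representation that is wot-continuous and hence, by Namioka's theorem (also invoked in the text), sot-continuous; so for each $v\in PV$ the orbit $(PSP)v$ is the norm-continuous image of a compact group, hence norm-compact. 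The inclusion $(\Id-P)V\subseteq V_2$ is immediate: $P(v-Pv)=0$, so picking $g_i\in G$ with $\rho(g_i)\to P$ in wot gives $\rho(g_i)(v-Pv)\to0$ weakly, i.e.\ $v-Pv$ is furtive.

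What remains — and is the heart of the matter — is the inclusion $PV\subseteq V_1$ (every $v\in PV$ has relatively norm-compact orbit) together with $V_2\subseteq(\Id-P)V$ (every furtive vector lies in $\ker P$). For the first: since $\rho(G)v\subseteq Sv=\overline{\rho(G)v}^{\mathrm{weak}}$, one must upgrade the weak compactness of $Sv$ to norm compactness for $v\in PV$, i.e.\ show $s\mapsto sv$ is $(\mathrm{wot},\mathrm{norm})$-continuous on all of $S$, by relating the $S$-orbit of $v$ to the norm-compact set $(PSP)v$ produced above. This, and the analogous statement for furtive vectors, is exactly the classical Jacobs--de Leeuw--Glicksberg analysis: it rests on the fine structure of the minimal ideal $K(S)$ of $S$ — its decomposition into minimal left and right ideals and maximal subgroups, and the way the units $\rho(g)$ permute these — used to show that an $S$-orbit in $PV$ cannot escape a single maximal subgroup's action up to norm-small error, while a furtive vector is annihilated by an element that is invertible inside its maximal subgroup and so must be killed by $P$. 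I expect this interplay between $\rho(G)$ and $K(S)$ to be the one genuine obstacle; granted it, the reductions of the first paragraph close the proof, and $G$-equivariance of $V=V_1\oplus V_2$ is automatic since both $V_1$ and $V_2$ were seen to be $G$-invariant.
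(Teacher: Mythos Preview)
Your framework is correct and close to the paper's, but you have explicitly left the two decisive steps---$PV\subseteq V_1$ and $V_2\subseteq\ker P$---as unresolved ``obstacles'', so the argument is incomplete as written. The paper does not close these via the Rees matrix structure of $K(S)$ that you anticipate; it invokes instead a single structural lemma due to Troallic (stated in the text immediately after the theorem): because $S$ contains the dense \emph{group} $\rho(G)$, the Sushkevich kernel $K(S)$ is itself a compact topological group, not merely a completely simple semigroup. In particular there is a unique minimal idempotent, namely the identity $e$ of $K(S)$, and $e$ is central in $S$: for every $s\in S$ one has $es,se\in K(S)$ (two-sided ideal), whence $es=ese=se$.

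With $P=e$ central, both of your gaps become one-liners. For $v=ev\in eV$ one has $\rho(G)v\subseteq Sv=Sev\subseteq K(S)v$, and $K(S)v$ is norm-compact because the compact topological group $K(S)$ acts sot-continuously on $V$ (exactly your Ellis/Namioka reasoning, applied now to $K(S)=eSe$ rather than to a single local group $PSP$). For a furtive $v$, pick $s\in S$ with $sv=0$; then $es\in K(S)$ has an inverse $t\in K(S)$, so $ev=t(es)v=0$ and $v\in\ker e$. The ``fine structure'' analysis of minimal left and right ideals you were bracing for is therefore unnecessary: the dense-subgroup hypothesis forces $K(S)$ to have trivial Rees structure, and recognising this is precisely the idea you are missing.
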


The above decomposition is due to K. Jacobs \cite{jacobs} and K. de Leeuw and I. Glicksberg \cite{deleeuw} in slightly more restrictive settings. A full proof of Theorem \ref{jdlg}  based on C. Ryll-Nardzewski's fixed point Theorem \cite{ryll} can be found in \cite{berglund} (see Cor. 6.2.19). 

We now sketch a more direct proof of Theorem \ref{jdlg} due to J.-P. Troallic \cite{troallic}. First, a simple compactness argument shows that any compact semitopological semigroup $S$ contains a unique minimal two-sided ideal $K(S)$ called the {\em Sushkevich kernel} of $S$. Moreover, using I. Namioka's joint continuity Theorem \cite{namioka}, J.-P. Troallic \cite{troallic} gives a simple proof of the following result.
\begin{thm}\label{troallic}
Let $S$ be a compact semitopological semigroup containing  a dense subgroup.
Then $K(S)$ is a compact topological group.
\end{thm}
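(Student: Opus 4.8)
The plan is to exploit the fact that the Sushkevich kernel $K(S)$ is itself a compact semitopological semigroup that turns out to be a group, and then invoke Ellis' joint continuity Theorem to upgrade it to a compact topological group. First I would recall the standard structure theory of compact semitopological (indeed, compact right-topological) semigroups: $K(S)$ exists, is unique, and decomposes as a disjoint union of isomorphic groups indexed by a set of idempotents, with a paved-matrix (Rees) structure. The crux is to show that under our hypothesis — $S$ contains a dense subgroup $\Gamma$ — the kernel $K(S)$ has a single idempotent, hence is a group. To see this, note that for any $g\in\Gamma$ the maps $x\mapsto gx$ and $x\mapsto xg$ are homeomorphisms of $S$ (inverses $x\mapsto g^{-1}x$, $x\mapsto xg^{-1}$), and by separate continuity together with density of $\Gamma$, left and right translations by arbitrary elements of $S$ map $S$ onto $S$: indeed $Sx \supseteq \overline{\Gamma x} \supseteq \overline{\Gamma}x$... more carefully, $\overline{\Gamma x}\subseteq Sx$ and $\Gamma x$ is dense in $S$ when $x$ lies in the kernel, using minimality of $K(S)$ and $\Gamma\cdot K(S)\subseteq K(S)$. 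The upshot I would establish is that $S$ is \emph{simple} (has no proper two-sided ideals) — equivalently $K(S)=S$ when $S$ already has a dense subgroup — and moreover left- and right-simple, which forces the kernel to be a single group.

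The key steps in order: (1) Show $S$ itself is left-simple and right-simple, i.e.\ $Sx=S=xS$ for all $x\in S$. This follows because $\Gamma x$ is dense in $S$ (as $\Gamma$ is dense and right translation by $x$ is... only separately continuous, so I must argue: $\overline{\Gamma}x = \overline{\Gamma x}$ requires continuity of $y\mapsto yx$, which holds by separate continuity, giving $S x = \overline{\Gamma}x = \overline{\Gamma x}\supseteq$ a dense subset; but $Sx$ is compact hence closed, so $Sx=S$), and symmetrically $xS=S$. (2) Deduce that $S$ has a unique minimal left ideal and a unique minimal right ideal, both equal to $S$, so $K(S)=S$; in particular $S$ contains a unique idempotent $e$, which is then a two-sided identity, and every element is invertible. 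Hence $S$ is algebraically a group. (3) Apply Ellis' joint continuity Theorem (quoted in the excerpt): a compact semitopological semigroup that is algebraically a group is a compact topological group. Therefore $K(S)=S$ is a compact topological group.

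The main obstacle is step (1): one must be careful that in a merely \emph{separately} continuous semigroup the naive identities $\overline{A}\,x=\overline{Ax}$ and $A\,\overline{x}$-type manipulations need not hold on both sides simultaneously, and that "$\Gamma$ dense in $S$'' does not a priori give "$\Gamma x$ dense in $S$'' for the \emph{right} factor without continuity of the relevant translation. The clean way around this — and what I would actually write — is to use Namioka's joint continuity Theorem exactly as Troallic does: a separately continuous action of a group (here the dense subgroup $\Gamma$, or rather the compact semigroup acting on itself) on a compact space has a dense set of joint-continuity points, and on the minimal closed invariant set the action is jointly continuous, forcing it to be a topological group action. Concretely, Namioka gives that multiplication is jointly continuous at points of $K(S)$, which both identifies $K(S)$ as a topological (sub)group and, combined with the density of the subgroup, shows $K(S)$ absorbs $\overline{\Gamma}=S$, so $K(S)=S$. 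I would present the argument in that order, with Namioka's theorem doing the heavy lifting and Ellis' theorem as the final packaging statement (or, since Namioka already yields joint continuity, Ellis can even be dispensed with here).
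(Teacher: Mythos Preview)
First, note that the paper does not actually supply a proof of this theorem; it simply quotes it as a result of Troallic, indicating only that the proof relies on Namioka's joint continuity theorem. So there is no in-paper argument to compare against.

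That said, your proposal contains a genuine error: the conclusion $K(S)=S$, equivalently that $Sx=S=xS$ for every $x\in S$, is false. Take $U$ to be the bilateral shift on $\ell^2(\Z)$ and let $S$ be the wot-closure of $\Gamma=\{U^n:n\in\Z\}$ in $B(\ell^2)$. Since $U^n\to 0$ in the weak operator topology as $|n|\to\infty$, one has $S=\Gamma\cup\{0\}$, a compact semitopological semigroup with $\Gamma$ as a dense subgroup. Here $K(S)=\{0\}$, which is indeed a (trivial) compact topological group, but $K(S)\ne S$ and $S\cdot 0=\{0\}\ne S$. The flaw in your step~(1) is the unjustified passage from ``$\Gamma$ dense in $S$'' to ``$\Gamma x$ dense in $S$'': separate continuity makes $y\mapsto yx$ continuous and hence gives $Sx=\overline{\Gamma x}$, but nothing forces $\Gamma x$ to be dense when $x\notin\Gamma$; in the example $\Gamma\cdot 0=\{0\}$. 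You flag this worry yourself, yet your ``clean'' workaround in the final paragraph still ends with ``$K(S)$ absorbs $\overline{\Gamma}=S$, so $K(S)=S$,'' which is the wrong target.

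What actually must be shown is subtler: the kernel $K(S)$ is a priori only a completely simple (Rees matrix) semigroup, and one needs (i) that it has a \emph{unique} idempotent, so that it is algebraically a group, and (ii) that multiplication is jointly continuous on it. Troallic's argument uses Namioka's theorem to locate points of joint continuity of the multiplication $S\times S\to S$ and then exploits the dense subgroup to transport this information; the dense-subgroup hypothesis is used to control the idempotent structure of $K(S)$, not to make $S$ itself simple. Ellis' theorem can then finish (ii) once (i) is in hand, as you suggest in step~(3), but your route to (i) does not work.
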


Now suppose that  $\rho\colon G\til GL(V)$ is a weakly almost periodic representation of a group $G$ and let $S$ denote the closure of $\rho(G)$ in $B(V)$ with respect to the weak operator topology. Then $K(S)$ is a compact topological group and thus, as mentioned above, the tautological representation of $K(S)$ on $V$ is almost periodic. 

Let $e$ denote the identity element in $K(S)$ and note that, being an idempotent operator, $e$ is a linear projection on $V$. We remark that a vector $v\in V$ is furtive if and only if there is some $s\in S$ so that $s(v)=0$. In particular, $V_2=\ker (e)$ is contained in the set of furtive vectors. Conversely, if $v$ is furtive, pick $s\in S$ so that $s(v)=0$ and note that since $es\in K(S)$ there is $t\in K(S)$ so that $e=t\cdot es$, whereby $e(v)=tes(v)=0$, i.e., $v\in V_2$, showing that $V_2$ is the set of furtive vectors.

Suppose, on the other hand, that $v\in V_1=\ker(\Id-e)$. Then $\rho(G)v\subseteq Sv=Se(v)\subseteq K(S)v$, whereby $v$ is $\rho(G)$-almost periodic. Since $V=V_1\oplus V_2$ and $V_1$ is contained in the closed linear subspace $V_{\rm ap}$ consisting of the almost periodic vectors and $V_{\rm ap}$ has trivial intersection with the set $V_2$ of furtive vectors, we must have $V_1=V_{\rm ap}$. Finally, since for all $s\in S$ we have $es=ese=se$, the projection $e$ commutes with $S$ and thus the decomposition is $G$-invariant.

The second decomposition due to L. Alaoglu and G. Birkhoff \cite{alaoglu} splits $V$ into the subspace of invariant vectors and a canonical complement.
\begin{thm}[The Alaoglu--Birkhoff decomposition] \label{ab}
Every wap Banach $G$-module $V$ 
admits a decomposition into closed linear $G$-invariant subspaces
\[
V=V_1\oplus V_2,
\]
where $V_1$ is the set of invariant vectors and $V_2$ is the set of vectors $v\in V$ with $0\in \overline{\rm conv}\big(\rho(G)v\big)$. 
\end{thm}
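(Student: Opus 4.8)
The plan is to reuse the semigroup machinery already set up for the Jacobs–de Leeuw–Glicksberg decomposition, but with a different idempotent. Let $S$ be the wot-closure of $\rho(G)$ in $B(V)$, which is a compact semitopological semigroup. The key idea is to produce a single operator $p \in \overline{\operatorname{conv}}(S)$ (closure in the weak operator topology) which is a $G$-invariant projection onto $V^G$. First I would observe that $\overline{\operatorname{conv}}(S)$ is again a compact semitopological semigroup: it is wot-compact by the Krein–Smulian theorem (or directly, since $S$ sits in a bounded set where wot agrees with the weak topology of a suitable function space, and convex hulls of weakly compact sets in a Banach space have weakly compact closure), and it is closed under multiplication because for fixed $t \in S$ the maps $x \mapsto xt$ and $x \mapsto tx$ are affine and wot-continuous, so they preserve $\overline{\operatorname{conv}}(S)$, and then one extends to products of two elements of $\overline{\operatorname{conv}}(S)$ by approximation.

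Next I would locate the idempotent. The Ryll–Nardzewski fixed point theorem, or more elementarily a standard argument using the Sushkevich kernel of the compact semitopological semigroup $\overline{\operatorname{conv}}(S)$ together with minimality, yields a minimal idempotent $p$ in this semigroup; alternatively, one directly applies a fixed point argument to the affine flow $G \curvearrowright \overline{\operatorname{conv}}(S)$. The crucial extra feature, coming from convexity, is that $p$ can be chosen to satisfy $gp = p$ and $pg = p$ for all $g \in \rho(G)$: indeed the set $\{x \in \overline{\operatorname{conv}}(S) : gx = x \ \forall g\}$ is a nonempty (by the fixed point theorem applied to left translations, which are affine wot-continuous) compact convex subsemigroup, and picking an idempotent $p$ there and then replacing it if necessary by an element fixed under right translations as well gives $p$ with $gp = pg = p$. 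From $gp=p$ for all $g$ we get $sp = p$ for all $s \in S$ by wot-density, hence $p^2 = p$ and $\operatorname{im}(p) \subseteq V^G$; and conversely $V^G \subseteq \operatorname{im}(p)$ since $p$ fixes invariant vectors (as $p$ is a wot-limit of convex combinations of elements of $S$, each of which fixes $v \in V^G$). Thus $V_1 := \operatorname{im}(p) = V^G$ and $V_2 := \ker(p)$ give a $G$-invariant direct sum decomposition, the invariance following from $pg = gp = p$, which yields $p\rho(g) = \rho(g)p$ on each summand.

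Finally I would identify $V_2 = \ker(p)$ with the set $\{v : 0 \in \overline{\operatorname{conv}}(\rho(G)v)\}$. For the inclusion $\supseteq$: if $0 \in \overline{\operatorname{conv}}(\rho(G)v)$, then since $x \mapsto px$ is wot-continuous and affine and $p\rho(g)v = pv$ for all $g$, we get $pv \in \overline{\operatorname{conv}}(\{p\rho(g)v\}) = \{pv\}$ while also $0$ lies in this set, forcing $pv = 0$. For $\subseteq$: if $pv = 0$, then $pv$ is a wot-limit of elements of $\operatorname{conv}(\rho(G)v)$, so $0 = pv \in \overline{\operatorname{conv}}(\rho(G)v)^{\text{wot}} = \overline{\operatorname{conv}}(\rho(G)v)^{\text{weak}}$, and on a convex set the weak and norm closures coincide by Mazur's theorem, giving $0 \in \overline{\operatorname{conv}}(\rho(G)v)$ in norm as stated. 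The main obstacle I anticipate is the first step — verifying carefully that $\overline{\operatorname{conv}}(S)$ is genuinely a compact semitopological semigroup, in particular that multiplication stays separately continuous and that the convex hull closure remains wot-compact — since everything downstream is then a clean application of the fixed point theorem plus Mazur's theorem. (This is, of course, essentially Alaoglu–Birkhoff's original argument \cite{alaoglu}, which also follows as a special case of Theorem \ref{jdlg} applied inside $\overline{\operatorname{conv}}(S)$; a reader may prefer the short route of simply citing \cite{alaoglu} or \cite{berglund}.)
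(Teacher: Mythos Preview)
Your route differs genuinely from the paper's. The paper does not construct the projection inside $\overline{\rm conv}(S)$; instead it first invokes the Jacobs--de Leeuw--Glicksberg decomposition to reduce to the almost periodic case (furtive vectors already lie in $V_2$ by Mazur), and then, with $\widetilde G=\overline{\rho(G)}^{\rm sot}$ a compact topological group, defines the projection by averaging against Haar measure, $\psi(Px)=\int_{\widetilde G}\psi(Tx)\,d\mu(T)$. Your convex-semigroup approach is more self-contained (it does not need Theorem~\ref{jdlg}) and is closer in spirit to mean-ergodic arguments; the paper's route is shorter once JdLG is in hand, since the Haar average immediately gives a $G$-equivariant projection with no further bookkeeping.

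The difficulty, however, is not where you anticipate it. The wot-compactness of $\overline{\rm conv}(S)$ is unproblematic: apply Krein's theorem to each orbit $\rho(G)v$ and use Tychonoff. The real gap is the fixed-point step. Theorem~\ref{thm: ryll-nardzewski} asks for a \emph{weakly} compact convex set in a Banach space, and $\overline{\rm conv}(S)$ is only wot-compact in $B(V)$, which is in general strictly weaker; if instead you work in the locally convex space $(B(V),{\rm wot})$, left translation by $G$ is not noncontracting, since for a furtive vector $w=T_1v-T_2v$ one has $\rho(g_i)(T_1-T_2)\to 0$ in wot along some net even though $T_1\neq T_2$. The repair is to apply Ryll--Nardzewski vector by vector: for each finite $F\subseteq V$ the diagonal orbit in $\big(\bigoplus_F V\big)_{\ell^\infty}$ has weakly compact convex hull (Krein) on which $G$ acts by affine isometries, yielding a fixed point, and then a finite-intersection-property argument inside the wot-compact set $\overline{\rm conv}(S)$ produces $p$ with $\rho(g)p=p$ for all $g$ (this is essentially the mechanism of Lemma~\ref{compact-RN}); the same device secures $p\rho(g)=p$. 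With this correction your argument goes through.
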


To see how to obtain the Alaoglu--Birkhoff decomposition from the Jacobs--de Leeuw--Glicksberg decomposition, note first that, by Mazur's Theorem, any furtive vector $v$ satisfies $0\in \overline{\rm conv}\big(\rho(G)v\big)$. So, by restricting the attention to the invariant subspace of almost periodic vectors, one may assume that $\rho\colon G\til GL(V)$ is almost periodic and hence that $\widetilde G=\overline{\rho(G)}^{\rm sot}$ is an sot-compact subgroup of $GL(V)$. Letting $\mu$ be the Haar measure on $\widetilde G$, we can now define a linear projection $P$ of $V$ onto its subspace $V^G$ of $\rho(G)$-invariant vectors via
$$
\psi\big(Px\big)=\int_{\widetilde G} \psi(Tx)\;d\!\mu(T),
$$
for all $\psi\in V^*$ and $x\in V$. Then $V_1=\im P=V^G$ and $V_2=\ker P$.

The third canonical decomposition of interest to us lies between the previous two and constructs an invariant complement to the linear subspace of vectors with finite orbits. We state the result obtained by combining all three decompositions.

\begin{thm}\label{decomp}
Every wap Banach $G$-module $V$ admits a decomposition 
\[
V=V_1\oplus V_2\oplus V_3\oplus V_4
\]
into closed linear $G$-invariant subspaces so that
\begin{enumerate}
\item $V_1$ is the set of $G$-invariant vectors,
\item $V_2$ is the closed linear span of finite-dimensional irreducible subspaces $F$ so that for every such $F$ the $G$-action on $F$ factors through a finite group, 
\item every non-zero vector in $V_3$ is almost periodic with infinite orbit, 
\item $V_4$ is the set of furtive vectors.
\end{enumerate}
\end{thm}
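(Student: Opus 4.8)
The plan is to refine the Jacobs--de Leeuw--Glicksberg and Alaoglu--Birkhoff decompositions by splitting the space $V_{\rm ap}$ of almost periodic vectors into three pieces: the invariants, a "finite orbit" part, and an "infinite orbit" part. Recall from Theorem~\ref{jdlg} that $V = V_{\rm ap}\oplus V_{\rm fur}$ with $V_{\rm fur}$ the set of furtive vectors; this $V_{\rm fur}$ will be our $V_4$. On $V_{\rm ap}$ the representation is almost periodic, so by the equivalences recalled at the start of Section~\ref{sec: wap} we may assume $\widetilde G = \overline{\rho(G)}^{\rm sot}$ is a compact topological group acting sot-continuously on $V_{\rm ap}$, and $V_{\rm ap}$ is the closed linear span of finite-dimensional irreducible $\widetilde G$-subspaces. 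First I would set $V_1 = V_{\rm ap}^G = V^G$, the invariant vectors, which splits off by the Alaoglu--Birkhoff projection $P$ (averaging over the Haar measure of $\widetilde G$); so $V_{\rm ap} = V_1 \oplus W$ where $W = \ker P$ restricted to $V_{\rm ap}$.

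Next I would decompose $W$ itself. Since $W$ is a closed $\widetilde G$-invariant complement inside the almost periodic part, it too is the closed linear span of finite-dimensional irreducible $\widetilde G$-subspaces, none of which is trivial. Call such an irreducible $F$ \emph{of finite type} if the $\widetilde G$-action on $F$ factors through a finite quotient of $\widetilde G$, and \emph{of infinite type} otherwise. Let $V_2$ be the closed linear span of the finite-type irreducibles and $V_3$ the closed linear span of the infinite-type irreducibles. The key structural input is that distinct non-isomorphic irreducible types span internally orthogonal (in the appropriate sense) closed subspaces and that every vector lies in the closure of the sum of its isotypic components — this is standard Peter--Weyl theory for the compact group $\widetilde G$, and it guarantees $W = V_2 \oplus V_3$ as a topological direct sum of closed $\widetilde G$-invariant subspaces. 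All these subspaces are $\rho(G)$-invariant because $G$ acts through $\widetilde G$ (and the splittings are $\widetilde G$-equivariant, hence commute with $\rho(G)$).

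It then remains to verify the orbit descriptions. For $V_2$: a vector lying in a single finite-type irreducible $F$ has $\rho(G)v \subseteq \widetilde G v$, and since the $\widetilde G$-action on $F$ factors through a finite group, the orbit $\widetilde G v$ is finite; a general vector of $V_2$ is a limit of vectors supported on finitely many finite-type isotypic components, so the claim as stated (closed linear span of finite-dimensional irreducibles whose $G$-action factors through a finite group) holds by construction. For $V_3$: if $0 \neq v \in V_3$ lies in an infinite-type irreducible $F$, then the image of $\widetilde G$ in $GL(F)$ is an infinite compact Lie group acting irreducibly, so no orbit $\widetilde G v$ of a non-zero vector can be finite (a finite orbit would give an open stabiliser of infinite index would be impossible, or more directly: the stabiliser would be open hence of finite index, contradicting infiniteness of the image); since $\rho(G)v$ is dense in $\widetilde G v$ and the latter is infinite, $v$ has infinite orbit. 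For a general $0\neq v\in V_3$ one argues that some isotypic component $v_\pi$ of $v$ is non-zero and of infinite type, and the orbit map $g\mapsto \rho(g)v$ composed with the projection to that component is injective on the orbit. I expect the main obstacle to be precisely this last point — showing that \emph{every} non-zero vector in $V_3$ (not just those inside a single irreducible) has infinite orbit, which requires carefully combining the Peter--Weyl decomposition with the fact that on each infinite-type isotypic component the compact-group orbit of any non-zero vector is infinite, and then checking that a finite $\rho(G)$-orbit in $V_3$ would force a finite $\widetilde G$-orbit on a non-trivial infinite-type component.
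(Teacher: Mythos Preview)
Your route via Peter--Weyl isotypic components is different from the paper's, and it carries a gap that the paper's argument neatly avoids.

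The paper reduces, as you do, to the almost periodic part and forms the compact group $\widetilde G = \overline{\rho(G)}^{\rm sot}$. Rather than grouping isotypic components into ``finite type'' and ``infinite type'', it lets $L$ be the \emph{connected component of the identity} in $\widetilde G$ and applies the Alaoglu--Birkhoff decomposition once more, this time with respect to $L$: this gives $V_{\rm ap} = V^L \oplus \ker P_L$ as a topological direct sum in one stroke. Since $\widetilde G / L$ is profinite, every finite-dimensional irreducible subrepresentation of $V^L$ has $\widetilde G$-action (hence $G$-action) factoring through a finite quotient, so $V^L$ is exactly your $V_1 \oplus V_2$. On $\ker P_L$, any nonzero vector $v$ is not $L$-fixed; a finite $L$-orbit would force an open finite-index stabiliser, hence (by connectedness of $L$) stabiliser equal to $L$, a contradiction. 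Thus every nonzero $v \in \ker P_L$ has infinite $L$-orbit and a fortiori infinite $G$-orbit. The obstacle you flagged at the end is thereby dissolved in one line, with no need to chase isotypic projections of $v$.

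There is a second gap you did not flag: the assertion that grouping isotypic types into ``finite'' and ``infinite'' yields a \emph{topological} direct sum $W = V_2 \oplus V_3$. In a Hilbert space this is automatic by orthogonality of isotypic components, but in a general Banach space there is no orthogonality, and the sum of an infinite family of isotypic projections need not converge to a bounded operator. ``Standard Peter--Weyl theory'' in the Banach setting (Shiga's theorem, cited in the paper) only says that $V_{\rm ap}$ is the closed linear span of finite-dimensional irreducibles; it does not guarantee that an arbitrary partition of the set of irreducible types into two pieces produces a bounded splitting. The gap \emph{is} fillable --- but the natural fix is to observe that a finite-dimensional irreducible of $\widetilde G$ factors through a finite quotient if and only if it is trivial on $L$ (an open-kernel representation is exactly one trivial on the identity component), so that your $V_1\oplus V_2$ coincides with $V^L$ and the $L$-averaging projection $P_L$ is the missing bounded projection. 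At that point you have recovered the paper's proof.
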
 

This last decomposition follows fairly easily from the preceding two. Namely, by the Jacobs--de Leeuw-Glicksberg decomposition, one may assume again that $\rho\colon G\til GL(V)$ is almost periodic and hence that $\widetilde G=\overline{\rho(G)}^{\rm sot}$ is an sot-compact subgroup of $GL(V)$. Let now $L$ denote the connected component of the identity in $\widetilde G$ and let $V=V_1\oplus V_2$ be the Alaoglu--Birkhoff decomposition with respect to the group $L$. Since $V_1$ is the subspace of $L$-invariant vectors, the action of $\widetilde G$ on $V_1$ factors through the profinite group $\widetilde G/L$. Thus, $V_1$ is the closed linear span of finite-dimensional irreducible subspaces $F$ and, on every such $F$, the action of $\widetilde G/N$, and hence also of $G$, factors through a finite quotient.  On the other hand, on $V_2$, the compact connected group $L$ acts non-trivially and hence has infinite orbits.

\section{A lemma about convex averages}
We recall C. Ryll-Nardzewski's fixed point Theorem \cite{ryll}, which is of central importance in the following.
A simple geometric proof can be found in \cite{asplund, fabian}.

\begin{thm}[Ryll-Nardzewski]\label{thm: ryll-nardzewski}
Let $G$ be a group acting by affine isometries on a weakly compact convex subset $C$ of a Banach space $V$. Then there is a vector $v\in C$ fixed by $G$.
\end{thm}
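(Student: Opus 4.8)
The plan is to reduce, via Zorn's Lemma and weak compactness, to the case of a \emph{minimal} invariant set, and then to rule out that such a minimal set has more than one point using a geometric lemma on convex averages --- the latter being where weak compactness genuinely enters and being the real content.

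\textbf{The soft reduction.} Each $g\in G$ acts on $C$ by an affine map whose linear part is a bounded linear isometry of $\overline{\operatorname{span}}(C-C)$; being bounded and linear it is weak--weak continuous, so each $g\colon C\til C$ is weakly continuous. Consequently, if $K\subseteq C$ is weakly closed, convex and $G$-invariant, then $K$ is again weakly compact and carries an induced action of the same kind. The family $\mathcal F$ of all nonempty such $K$, partially ordered by reverse inclusion, contains $C$, and every chain has an upper bound: the intersection of a chain of nonempty weakly closed subsets of the weakly compact space $C$ is nonempty by the finite intersection property, and is plainly convex and $G$-invariant. By Zorn's Lemma there is a minimal $K\in\mathcal F$, and it suffices to prove that $K$ is a singleton, since its unique point is then fixed by $G$.

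\textbf{The core.} Suppose $K$ has two distinct points $a\neq b$, and let $c=\tfrac12(a+b)\in K$. Since $Gc\subseteq K$ and $K$ is weakly compact convex, $\overline{\operatorname{conv}}^{\,w}(Gc)$ is a nonempty weakly closed convex $G$-invariant subset of $K$, hence equals $K$ by minimality; in particular $a\in\overline{\operatorname{conv}}^{\,w}(Gc)$. On the other hand, $g$ being isometric on $C$, each orbit point $gc=\tfrac12(ga+gb)$ is the exact midpoint of two points at mutual distance $\norm{a-b}$. The lemma about convex averages now yields the contradiction: using that a weakly compact convex set is dentable, one selects $\varphi\in V^*$ and a slice $\{x\in K:\varphi(x)>\alpha\}$ of norm-diameter as small as desired; since $\varphi$ is linear and weakly continuous it attains the same supremum over $K$, over $\operatorname{conv}(Gc)$ and over $Gc$, so some orbit point $g_0c$ falls into that slice; exploiting the rigid midpoint structure of $g_0c$ relative to $g_0a,g_0b$ --- iterating the construction, or combining it with the $G$-invariance of suitable geometric functionals such as $z\mapsto\sup_{x\in K}\norm{z-x}$ --- one produces a \emph{proper} nonempty weakly closed convex $G$-invariant subset of $K$, contradicting minimality. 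Therefore $K$ is a singleton and the theorem follows.

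\textbf{The main obstacle.} It is the convex-averages lemma and, above all, the bookkeeping that turns it into a violation of minimality. The difficulty is intrinsic: weakly open subsets of a Banach space need not have small norm-diameter, so there is no soft route from the weak information ``$a\in\overline{\operatorname{conv}}^{\,w}(Gc)$'' to a norm estimate, and one must genuinely invoke that every weakly compact convex set has slices of arbitrarily small diameter (equivalently, the Radon--Nikodym property) and then feed this into the isometry hypothesis with the quantifiers arranged just so. This is precisely the technical heart of the geometric proofs in \cite{asplund} and \cite{fabian}, which I would follow for this step rather than reprove dentability from scratch. The rest --- the Zorn reduction, weak continuity of the action, and the identity $\overline{\operatorname{conv}}^{\,w}(Gc)=K$ --- is routine.
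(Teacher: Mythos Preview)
The paper does not prove this theorem at all: it is stated as a quoted result, with the remark that ``a simple geometric proof can be found in \cite{asplund, fabian}.'' There is thus no in-paper proof to compare against.

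Your sketch follows exactly the Namioka--Asplund strategy that the paper cites: a Zorn reduction to a minimal invariant weakly compact convex set, followed by a dentability argument to rule out minimal sets with more than one point. The reduction is carried out correctly. The core step, however, is not actually proved: you gesture at slices of small diameter and at the invariant radius functional $z\mapsto\sup_{x\in K}\norm{z-x}$, but you do not produce the proper invariant subset, and you explicitly say you would ``follow \cite{asplund} and \cite{fabian} for this step rather than reprove dentability from scratch.'' So your proposal is, in substance, a more expanded pointer to the very references the paper already cites, with the routine part filled in and the hard part deferred. That is not a gap relative to the paper --- the paper defers the whole thing --- but it is not a self-contained proof either.

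One terminological warning: your phrase ``the lemma about convex averages'' collides badly with this paper, whose Section~3 is titled exactly that and whose lemma there \emph{uses} the Ryll--Nardzewski theorem in its proof. Invoking it here would be circular; you clearly mean the dentability lemma from \cite{asplund}, so say so.
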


The $G$-action on a Banach $G$-module $V$ extends to an action of the 
group algebra $\R G$ by linearity. In the group algebra, 
we denote by $\Delta_G$ the simplex generated by the elements of $G$,
that is, 
\[ 
\Delta_G= \bigl\{\sum_{i=1}^n \lambda_i h_i \in \R G \del n \in \N,~\lambda_i\in [0,1],
~h_i\in G,~\sum_{i=1}^n \lambda_i =1\bigr\}. 
\]

\begin{lem} \label{compact-RN} 
Let $V$ be a wap Banach $G$-module and $K\subseteq V$ a norm compact subset.  Then for every $\epsilon>0$ there is $\delta \in\Delta_G$
so that ${\rm dist}(\delta\cdot v, V^G) <\epsilon$ for every $v\in K$. 
\end{lem}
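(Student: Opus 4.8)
The goal is to produce a single convex average $\delta \in \Delta_G$ that moves \emph{every} point of the compact set $K$ to within $\epsilon$ of the invariant subspace $V^G$. The plan is to first handle a single vector via the Alaoglu--Birkhoff decomposition, and then to promote this to a uniform statement over the compact set $K$ by a covering argument, using that convex averages can be iterated.

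First I would record the single-vector case. Fix $v\in V$ and write $v = v_1 + v_2$ according to the Alaoglu--Birkhoff decomposition (Theorem~\ref{ab}), so $v_1\in V^G$ and $0\in\overline{\rm conv}(\rho(G)v_2)$. Since $\rho(G)v_1 = \{v_1\}$, we have $\rho(G)v = v_1 + \rho(G)v_2$, hence $v_1 \in \overline{\rm conv}(\rho(G)v)$; equivalently, for every $\epsilon>0$ there is $\delta\in\Delta_G$ with $\norm{\delta\cdot v - v_1}<\epsilon$, and in particular ${\rm dist}(\delta\cdot v, V^G)<\epsilon$. So the lemma holds when $K$ is a single point. (The weak-closure/norm-closure distinction is harmless here because by Mazur's Theorem the weak closure and norm closure of the convex set ${\rm conv}(\rho(G)v)$ coincide.)

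Next I would upgrade to a finite set and then to $K$. The key observation is that $\Delta_G$ is closed under the operation $(\delta,\delta')\mapsto \delta\delta'$ (multiplication in $\R G$), and that applying a further average $\delta'$ to an already-averaged vector keeps us in the orbit-convex-hull: if $\norm{\delta\cdot v - w}<\epsilon$ with $w\in V^G$, then because $\delta'$ has $\norm{\cdot}\leqslant 1$ as an operator, $\norm{\delta'\delta\cdot v - w} = \norm{\delta'(\delta\cdot v - w)}<\epsilon$, as $\delta' w = w$. So given finitely many vectors $v^{(1)},\dots,v^{(m)}$, I apply the single-vector case to $v^{(1)}$ to get $\delta_1$, then to $\delta_1\cdot v^{(2)}$ to get $\delta_2$, and so on; the product $\delta_m\cdots\delta_1 \in \Delta_G$ then satisfies ${\rm dist}(\delta_m\cdots\delta_1\cdot v^{(j)}, V^G)<\epsilon$ for all $j\leqslant m$ (for $j<m$ using that the later averages only improve the estimate via the contraction argument just given). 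Finally, since $K$ is norm compact and since every $\delta\in\Delta_G$ acts as an operator of norm $\leqslant 1$ (the representation is bounded, indeed isometric in the equivalent norm $\triple{\cdot}$), the map $v\mapsto \delta\cdot v$ is uniformly Lipschitz; so I cover $K$ by finitely many balls of radius $\epsilon/2$ centered at points $v^{(1)},\dots,v^{(m)}\in K$, find $\delta\in\Delta_G$ with ${\rm dist}(\delta\cdot v^{(j)}, V^G)<\epsilon/2$ for all $j$ by the finite case, and conclude ${\rm dist}(\delta\cdot v, V^G)<\epsilon$ for every $v\in K$ since each $v$ is within $\epsilon/2$ of some $v^{(j)}$ and $\delta$ does not increase that distance by more than $\epsilon/2$.

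The main obstacle, and the one subtlety worth being careful about, is the uniformity: moving from "for each $v$ there is a $\delta$" to "there is a $\delta$ working for all $v\in K$ simultaneously". This is exactly what compactness of $K$ buys us, but it relies crucially on two compatible facts — that $\Delta_G$ is a semigroup so that partial averages can be composed, and that every element of $\Delta_G$ acts as a (non-expansive) contraction, so that composing averages never undoes progress already made and never spreads points apart by more than their original separation. With the equivalent $G$-invariant norm $\triple{\cdot}$ on $V$ in hand (available since wap representations are bounded), both facts are immediate, and no appeal to the Ryll-Nardzewski theorem itself is actually needed for this lemma — only the Alaoglu--Birkhoff decomposition, which of course is itself ultimately a consequence of Ryll-Nardzewski.
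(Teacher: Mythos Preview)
Your argument is correct, and the route is genuinely different from the paper's. Both proofs begin with the same reduction: pass to an equivalent $G$-invariant norm so that every $\delta\in\Delta_G$ is a contraction, and replace the compact set $K$ by a finite $\epsilon/2$-net $\{v_1,\ldots,v_n\}$. From there the paper handles the finite set in one shot: it forms the $\ell_\infty$-sum $U=\bigl(\bigoplus_{i=1}^n V\bigr)_{\ell_\infty}$, observes that the diagonal $G$-action on $U$ is still wap, applies Krein's theorem to get that $C=\overline{\rm conv}\bigl(G\cdot(v_1,\ldots,v_n)\bigr)$ is weakly compact, and invokes the Ryll--Nardzewski fixed point theorem directly to locate a $G$-fixed point $y\in C\cap U^G$, which is then approximated by some $\delta\cdot(v_1,\ldots,v_n)$. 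You instead black-box the Alaoglu--Birkhoff decomposition for the single-vector case and then \emph{iterate}: having pushed $v^{(1)}$ close to $V^G$ via $\delta_1$, you apply the single-vector case again to $\delta_1\cdot v^{(2)}$, and so on, relying on the semigroup property of $\Delta_G$ together with the fact that contractions fixing $V^G$ never undo prior progress. Your approach has the minor advantage of avoiding Krein's theorem and the verification that the finite $\ell_\infty$-sum is again wap; the paper's approach is slicker in that a single application of Ryll--Nardzewski replaces the inductive composition of averages. As you note, the dependence on Ryll--Nardzewski is still there, hidden inside Alaoglu--Birkhoff.
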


\begin{proof}
Since a wap representation is automatically bounded, we may and will, upon replacing 
the norm on $V$ by an equivalent one, assume that 
$G$ acts by isometries on $V$. So every element of $\Delta_G$ acts as a contraction.
So let $\{v_1,\ldots, v_n\}\subseteq K$ be a finite $\epsilon/2$-net in $K$ and note that it suffices to find $\delta\in \Delta_G$ so that ${\rm dist}(\delta\cdot v_i, V^G) <\epsilon/2$ for every $i$.

Let now $U=\big(\bigoplus_{i=1}^n V\big)_{\ell_\infty}$ denote the $\ell_\infty$ sum of $n$ copies of $V$ and set $x=(v_1,\ldots,v_n)$. Note that with the diagonal $G$-action, $U$ becomes a wap $G$-module. Since $G\cdot x$ is relatively weakly compact in $U$, by Krein's Theorem (Thm.\! 3.133 \cite{fabian}), 
$$
C={\overline{\rm conv}}^{\norm\cdot}\big(G\cdot x\big)={\overline{\big(\Delta_G\cdot x\big)}}^{\norm\cdot}
$$
is a weakly compact convex subset of $U$ invariant under the $G$-action. Since $G$ acts by affine isometries on $C$, by the Ryll-Nardzewski fixed point Theorem, there is some $y\in C$ fixed by $G$, i.e., $y\in C\cap U^G$. 
Pick $\delta\in \Delta_G$ so that $\norm{\delta\cdot x-y}<\eps/2$, whereby ${\rm dist}(\delta\cdot v_i, V^G) <\epsilon/2$ for all $i$ and thus ${\rm dist}(\delta\cdot v, V^G) <\epsilon$ for all $v\in K$.
\end{proof}

%
%

\section{Continuous cohomology of topological groups} 
\label{sec:cohomological_preliminaries}

We review some basic facts about continuous cohomology of topological groups. 
In the sequel, let $G$ be a topological group and $V$ be a continuous Banach $G$-module. 
Let $C(G^n, V)$ be the vector space of maps $G^n\to V$ that are continuous 
with respect to the norm-topology on $V$; we endow this space with 
the compact-open topology. The group $G$ acts continuously from the left on 
$C(G^n, V)$ by 
\[(g\cdot c)(g_0,\ldots,g_{n-1})=gc(g^{-1}g_0,\ldots, g^{-1}g_{n-1}).\] 
The 
invariants $C(G^n, V)^G$ is the closed subspace of equivariant maps in 
$C(G^n,V)$. 

Recall that the \emph{standard homogeneous resolution} is the complex  $C(G^{\ast+1},V)$
\[
	C(G, V)\xrightarrow{\partial} C(G^2, V)\xrightarrow{\partial} \ldots\xrightarrow{\partial} C(G^{n+1}, V)\to\ldots
\]
starting in degree zero with differentials\footnote{As it is common, the degree is dropped from the notation of a differential.}, which are continuous and $G$-equivariant,  given by 
\[
	(\partial c)(g_0,\ldots, g_n)=\sum_{k=0}^{n}(-1)^kc(g_0,\ldots, \hat{g_i}, \ldots, g_n). 
\]
The $n$-th \emph{continuous cohomology} $\cohom^n(G;V)$ of $G$ with coefficients in $V$ is commonly defined 
as the $n$-th cohomology of the complex $C(G^{\ast+1},V)^G$. The $n$-th \emph{reduced continuous cohomology} $\redcohom^n(G;V)$ is defined by the quotient of the space of cocycles \[Z^n(G;V):=\ker(\partial)\subset C(G^{n+1},V)^G\] 
by the closure of the space of coboundaries 
\[B^n(G;V):=\im(\partial)\subset C(G^{n+1},V)^G.\] 


Let us recall the functoriality 
of continuous group cohomology as a functor in two variables. 
Let $\scrG$ be the category of pairs $(G, V)$, 
where $G$ is a topological group, $V$ is a Banach $G$-module, and a morphism from 
$(G,V)$ to $(G',V')$ is a pair 
$(\alpha: G\to G', f: V'\to V)$ consisting of a continuous group 
homomorphism $\alpha$ and a bounded linear operator $f$ 
such that $f(\alpha(g)v)=gf(v)$ for every $v\in V'$ and $g\in G$.
Such a morphism $(\alpha, f)$ induces a continuous chain map 
\begin{align*}
	C(\alpha, f)\colon& C(G'^{n+1}, V')^{G'}\to C(G^{n+1},V)^G\\&c\mapsto \bigl((g_0,\ldots, g_n)\mapsto f(c(\alpha(g_0),\ldots,\alpha(g_n)))\bigr)
\end{align*}
and thus maps in (reduced) continuous cohomology which we denote by $\cohom^\ast(\alpha,f)$ and 
$\redcohom^\ast(\alpha, f)$, respectively. Both continuous cohomology and 
reduced continuous cohomology are functors from $\scrG$ to abelian groups. 

Suppose $g\in G$. Let $\rho(g)\colon V\to V$ be multiplication by $g$ and 
$conj_g:G\to G$, $conj_g(h)=g^{-1}hg$ be conjugation by $g$. Then $(conj_g, \rho(g))$ is 
a $\scrG$-morphism from $(G,V)$ to itself, and, for $c\in C(G^{n+1},V)^G$, $C(conj_g, \rho(g))(c)$ is given by
\[
C(conj_g, \rho(g))(c)(h_0, \ldots, h_{n})=gc(g^{-1}h_0g,\ldots,g^{-1}h_0g)=
c(h_0g,\ldots, h_{n}g).
\]
It is well known that

\begin{lem}\label{lem: identity}
	$\cohom^\ast(conj_g, \rho(g))$ and $\redcohom^\ast(conj_g, \rho(g))$ are the identities 
	on $\cohom^\ast(G;V)$ and $\redcohom^\ast(G;V)$, respectively.
\end{lem}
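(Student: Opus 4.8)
The plan is to realize $C(conj_g,\rho(g))$ at the cochain level as chain-homotopic to the identity map of the complex $C(G^{\ast+1},V)^G$; this is the classical argument showing that a group acts trivially on its own cohomology. Abbreviate $\phi=C(conj_g,\rho(g))$, so that, as computed in the excerpt,
\[
	\phi(c)(h_0,\ldots,h_n)=c(h_0g,\ldots,h_ng)\qquad\text{for }c\in C(G^{n+1},V)^G.
\]
For each $n\geqslant 1$ I would define a linear map $h\colon C(G^{n+1},V)^G\to C(G^n,V)^G$ by the prism formula
\[
	(hc)(h_0,\ldots,h_{n-1})=\sum_{j=0}^{n-1}(-1)^j\,c(h_0,\ldots,h_j,\,h_jg,\,h_{j+1}g,\ldots,h_{n-1}g),
\]
and set $h=0$ in degree $0$. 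The first thing to check is that $h$ genuinely maps into $C(G^n,V)^G$: each summand is $c$ precomposed with the map $G^n\to G^{n+1}$ that repeats the $j$-th coordinate and right-translates the trailing block by $g$, which is norm-continuous; and since left translation by any $k\in G$ commutes with these repeat-and-translate maps, $hc$ inherits $G$-equivariance from $c$. (That $h$ is itself continuous for the compact-open topologies is clear as well, though it will not be needed.)

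The heart of the matter is the homotopy identity
\[
	\partial h+h\partial=\phi-\id
\]
on $C(G^{\ast+1},V)^G$. I would verify this by a direct computation: expand $\partial(hc)$ and $h(\partial c)$ using the alternating-sum formula for $\partial$ from Section~\ref{sec:cohomological_preliminaries}, and observe that every term cancels against a partner except $c(h_0g,\ldots,h_ng)$ and $-c(h_0,\ldots,h_n)$. The only point requiring care is the bookkeeping of which face operator collides with the repeated coordinate; this is routine and entirely classical, and the pattern is already visible in degree $1$, where $(hc)(h_0)=c(h_0,h_0g)$ and both sides of the identity evaluate on $(h_0,h_1)$ to $c(h_0g,h_1g)-c(h_0,h_1)$.

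Granting the homotopy identity, the conclusion is immediate. If $c\in Z^n(G;V)$, then $h(\partial c)=0$, so $\phi(c)-c=\partial(hc)\in B^n(G;V)$; hence $\phi$ induces the identity on $\cohom^n(G;V)=Z^n/B^n$, and, since $B^n(G;V)\subseteq\overline{B^n(G;V)}$, also on $\redcohom^n(G;V)=Z^n/\overline{B^n}$. No limiting or topological argument is needed for the reduced statement, since $\phi(c)-c$ is an actual coboundary rather than merely a limit of coboundaries. The main (and only) obstacle is the combinatorial verification of the homotopy identity; there is no conceptual difficulty.
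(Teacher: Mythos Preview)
Your proposal is correct and is essentially the paper's own proof: the authors write down the same prism chain homotopy (with the opposite sign convention, yielding $h\partial+\partial h=\id-\phi$) and note that it is continuous and $G$-equivariant. Your additional remarks on why $h$ lands in $C(G^n,V)^G$ and why the reduced statement follows automatically are accurate elaborations of what the paper leaves implicit.
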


See~\cite[(8.3) Proposition on p.~80]{brown-book} for a homological-algebra proof in the setting of discrete groups. To see that it still holds in the setting of 
continuous cohomology for arbitrary topological groups, we give an explicit and 
continuous chain homotopy between $\id_{C(G^{\ast+1}, V)^G}$ and $\phi:= C(conj_g,\rho(g))$. 

\begin{proof}
	It is straightforward to verify that the collection of homomorphisms for $n\geqslant 0$
	\begin{gather*}
		h: C(G^{n+1}, V)^G\to C(G^n, V)^G\\
		c\mapsto \bigl((g_0,\ldots, g_{n-1})\mapsto \sum_{k=0}^{n-1}(-1)^{k+1}c(g_0,\ldots, g_k, g_k g,\ldots, g_{n-1}g)\bigr)
	\end{gather*}
    satisfies $h\partial+\partial h=\id-\phi$, 
so provides a chain homotopy between $\id$ and $\phi$, which is obviously continuous.  
\end{proof}

\section{Proofs of cohomological statements} 
\label{sec:proofs_of_cohomological_statements}


\begin{proof}[Proof of Theorem~\ref{thm: restriction to normal subgroup}]
Denote the representation by $\rho\colon G\to GL(V)$ and the inclusion by $j\colon N\to G$.  
For $h\in C$ the following diagram of morphisms in $\scrG$ is commutative 
since $h$ centralizes $N$: 
\[\xymatrixcolsep{5pc}
	\xymatrix{ (N, V)\ar[d]_{(\id, \rho(h))}\ar[r]^{(j, \id_V)} &(G, V)\ar[d]^{(conj_h, \rho(h))}\\
	           (N, V)\ar[r]^{(j, \id_V)} & (G, V)
	}
\]
Hence we obtain a commutative diagram of abelian groups 
\[\xymatrixcolsep{5pc}
	\xymatrix{
	\cohom^n(G, V)\ar[d]_{\cohom^n(conj_h, \rho(h))}\ar[r]^{\cohom^n(j,\id_V)}& \cohom^n(N,V)\ar[d]^{\cohom^n(\id, \rho(h))}\\
	\cohom^n(G,V)\ar[r]^-{\cohom^n(j,\id_V)} & \cohom^n(N, V)
	}
\]
The group $C$ acts on $C(N^{n+1}, V)^N$ and $\cohom^n(N, V)$ by 
$C(\id, \rho(h))$ and $\cohom^n(\id, \rho(h))$, respectively, for $h\in C$. By linearity these actions extend to 
endomorphisms $\rho(\delta)$ for every element $\delta\in \Delta_C$ in 
the standard simplex of the group ring of $C$.  
By Lemma~\ref{lem: identity} 
the left vertical map $\cohom^n(conj_h, \rho(h))$ is the identity. Hence 
$\rho(\delta)(x)=x$ for every $\delta\in \Delta_C$ and every $x$ in the image 
of the restriction map $\cohom^n(j,\id_V)$. 
Let $c\in C(N^{n+1}, V)^N$ be a cocycle that lies in the image of $C(j, \id_V)$. 
Let $\epsilon>0$ and a compact subset $K\subset N^{n+1}$ be given. 
By applying Lemma~\ref{compact-RN} to the compact subset $c(K)\subset V$, we find 
some $\delta\in\Delta_{C}$ with 
\[
	\sup_{z\in K}\norm{\rho(\delta)(c)(z)}<\epsilon. 
\]
By the discussion above there is some coboundary $b$ with 
$c=\rho(\delta)(c)+b$. Since $\epsilon$ and $K$ were arbitrary, it follows 
that the class of $c$ is zero in reduced continuous cohomology. 
\end{proof}

\begin{proof}[Proofs of Theorems~\ref{thm: center} and~\ref{thm: nilpotent}]
	By the Alaoglu-Birkhoff decomposition (Theorem~\ref{ab}) 
	there is a $Z$-invariant decomposition 
	\[V=V^Z\oplus V_0\] 
	with $V_0=\{v\in V\mid 0\in \overline{\rm conv}(\rho(G)v)\bigr\}$. 
	Since $Z$ is central, this decomposition is also $G$-invariant. 
	Theorem~\ref{thm: restriction to normal subgroup} applied to $N=G$ yields 
	$\redcohom^\ast(G, V_0)=0$, thus 
	\begin{equation}\label{eq: reduction}
	\redcohom^\ast(G, V)\cong \redcohom(G, V^Z).
	\end{equation}
	Now assume that $V^G=\{0\}$. Under this assumption we prove that the 
	projection $G\to G/Z$ induces a commutative diagram 
	\[
		\xymatrix{
              C(G/Z, V^Z)^{G/Z}\ar[d]^\partial\ar[r]^-\cong & C(G, V^Z)^G\ar[d]^\partial\\
              Z^1(G/Z, V^Z)\ar[r]^-\cong & Z^1(G, V^Z)
		}
	\]	
	with the horizontal arrows being topological isomorphisms (this obvious for the 
	upper one). This will yield $\redcohom^1(G,V^Z)\cong \redcohom^1(G/Z, V^Z)$ 
	and by~\eqref{eq: reduction} it will imply 
	Theorem~\ref{thm: center}. To see that the lower horizontal arrow is a 
	topological isomorphism it suffices to show that $c\in Z^1(G, V^Z)$ 
	satisfies the relation 
	\begin{equation}\label{cocycle equation}
		c(hg_1,g_2)=c(g_1,hg_2)=c(g_1,g_2)\text{ for all $h\in Z$ and $g_1,g_2\in G$.}
	\end{equation}
    For fixed $h\in Z$ and arbitrary $g\in G$ we have 
	$0=\partial c(h,g,hg)-\partial c(e,h,g)=gc(e,h)-c(e,h)$ yielding $c(e,h)=0$ 
	because of $V^G=\{0\}$. 
	Hence $0=\partial c(h,g,hg)=c(g,hg)-c(h,hg)+c(h,g)=gc(e,h)-c(e,g)+c(h,g)=-c(e,g)+c(h,g)$ from 
	which~\eqref{cocycle equation} directly follows by $G$-equivariance. 
	
	To prove Theorem~\ref{thm: nilpotent} first note that it holds for abelian groups 
by Theorem~\ref{thm: restriction to normal subgroup}. With Theorem~\ref{thm: center} 
we can now run an obvious induction over the nilpotency degree. 
\end{proof}


\begin{thebibliography}{99}

\bibitem{alaoglu} L. Alaoglu and G. Birkhoff, {\em General ergodic theorems}, Ann. Math. 41 (1940), no. 2, 293--309.


\bibitem{bader} U. Bader, A. Furman, T. Gelander and N. Monod, {\em Property (T) and rigidity for actions on Banach spaces}, Acta Math. 198 (2007), no. 1, 57--105. 

\bibitem{berglund} J. F. Berglund, H. D Junghenn and P. Milnes, {\em  Analysis on semigroups. Function spaces, compactifications, representations}, Canadian Mathematical Society Series of Monographs and Advanced Texts. A Wiley-Interscience Publication. John Wiley \& Sons, Inc., New York, 1989.


\bibitem{blanc} P. Blanc, {\em Sur la cohomologie continue des groupes localement compacts}, Ann. Sci. \'Ecole Norm. Sup. 12 (1979), no. 2, 137--168.  


\bibitem{bourdon} M.~Bourdon and F.~Martin and A.~Valette, {\em Vanishing and non-vanishing of the first $L^p$-cohomology of groups}, Comment. Math. Helv. 80 (2005), 377--389.

\bibitem{brown-book}K. Brown, {\em Cohomology of groups}, Graduate Texts in Mathematics, Springer, New York, 1994

\bibitem{nagel}T. Eisner, B. Farkas, M. Haase and  R. Nagel, {\em Operator Theoretic Aspects of Ergodic Theory},
Graduate Texts in Mathematics, Springer, to appear.

\bibitem{ellis2}R. Ellis, {\em Locally compact transformation groups}, Duke Math. J. 24 (1957), 119--126.

\bibitem{ellis}R. Ellis and M. Nerurkar, {\em Weakly almost periodic flows}, Trans. Amer. Math. Soc. 313 (1989), no. 1, 
103--119.

\bibitem{fabian}M. Fabian, P. Habala, P.  H\'ajek, V. Montesinos, V. Zizler, {\em 
Banach space theory. 
The basis for linear and nonlinear analysis. } CMS Books in Mathematics/Ouvrages de Math\'ematiques de la SMC. Springer, New York, 2011.

\bibitem{grothendieck}A. Grothendieck, {\em Crit\`eres de compacit\'e dans les espaces fonctionnels generaux}, 
Amer. J. Math. 74 (1952), no. 1, 168--186.

\bibitem{jacobs}K. Jacobs, {\em Fastperiodizit\"atseigenschaften allgemeiner Halbgruppen in Banach-R\"aumen},  (German) 
Math. Z. 67 (1957), 83--92. 

\bibitem{deleeuw}K. de Leeuw and I. Glicksberg, {\em  Applications of almost periodic compactifications}, Acta Math. 105 1961 63--97. 

\bibitem{kappos}E. Kappos, {\em $\ell^p$-cohomology for groups of type $FP_n$},  http://arxiv.org/abs/math/0511002. 

\bibitem{martin}F.~Martin and A.~Valette, {\em On the first $L^p$-cohomology of discrete groups}, Groups Geom. Dyn. 1 (2007), 81--100.


\bibitem{namioka}I. Namioka, {\em Separate continuity and joint continuity}, Pacific J. Math. 51 (1974), no. 2,  515--531.

\bibitem{nowak}P. Nowak, {\em On the $\ell^p$-cohomology of groups with infinite center}, Preprint (2013).

\bibitem{asplund} I. Namioka and E. Asplund, {\em A geometric proof of Ryll-NardzewskiÕs fixed point theorem}, Bull. Amer. Math. Soc. 73 (1967), 443--445.

\bibitem{puls} M. Puls, {\em Group cohomology and $L^p$-cohomology of finitely generated groups}, Canad. Math. Bull 46 (2003), no. 2, 268--276.


\bibitem{ryll}C. Ryll-Nardzewski, {\em Generalized random ergodic theorems and weakly almost periodic functions}. Bull. Acad. Polon. Sci. S\'er. Sci. Math. Astronom. Phys. 10 (1962),  271--275.

\bibitem{shalom}Y. Shalom, {\em Rigidity of commensurators and irreducible lattices}. Invent. Math. 141 (2000), 1--54.

\bibitem{shalom-harmonic}Y. Shalom, {\em Harmonic analysis, cohomology, and the large-scale geometry of amenable groups}. Acta Math. 192 (2004), 119--185.

\bibitem{shiga}K. Shiga, {\em Representations of a compact group on a Banach space},
J. Math. Soc. Japan 7 (1955), 224--248.

\bibitem{troallic}J.-P. Troallic, {\em Une approche nouvelle de la presque-p\'eriodicit\'e faible}, Semigroup Forum 22 (1981)	247--255.




\end{thebibliography}
\end{document}